%
%
%
%
\documentclass[reqno,10pt]{amsart}
\makeatletter
 \usepackage{enumitem}
 \topmargin =00mm \headheight=00mm \headsep=10mm \textheight =220mm	
 \textwidth =160mm \oddsidemargin=0mm\evensidemargin =0mm

\newcommand{\Rmnum}[1]{\expandafter\@slowromancap\romannumeral#1@}

\newtheorem{lemma}{Lemma}[section]

\newtheorem{theorem}{Theorem}[section]

\newtheorem{remark}{Remark}[section]
\newtheorem{definition}{Definition}[section]

%
%

\numberwithin{equation}{section}



\title[Compressible Euler Equations]
{$L^1$-convergence to generalized Barenblatt solution  for compressible Euler equations  with time-dependent damping}%
\author[ S. Geng,  F. Huang and X. Wu]{  }
\email{sfgeng@xtu.edu.cn}

\email{fhuang@amt.ac.cn}
\email{wuxc19@amss.ac.cn}

\subjclass[2000]{35L65, 76S05, 35K65.}
 \keywords{convergence rates,  compressible Euler Equations,  generalized Barenblatt solution,  time-dependent damping, compensated compacted.}
\begin{document}

\maketitle \centerline{\scshape Shifeng Geng$^{a}$ \ \   Feimin Huang$^{b,c}$ \ \    Xiaochun Wu$^{b}$}
\medskip
{\footnotesize
 \centerline{$^a\ \!$School of Mathematics and Computational Science, Xiangtan University}
 \centerline{Xiangtan $411105$, China}
 \centerline{$^b\ \!$Academy of Mathematics and Systems Science, Chinese Academy of Sciences}
   \centerline{Beijing 100190, China}
 \centerline{$^c\ \!$School of Mathematical Sciences, University of Chinese Academy of Sciences}
 \centerline{Beijing 100049, China}}

\medskip
\bigskip
\begin{abstract}
The large time behavior of entropy solution to the  compressible Euler equations for polytropic gas (the pressure $p(\rho)=\kappa\rho^{\gamma}, \gamma>1$)
with time dependent damping like $-\frac{1}{(1+t)^\lambda}\rho u$ ($0<\lambda<1$) is investigated. By introducing an elaborate iterative method and using the intensive entropy analysis, it is proved that  the $L^\infty$ entropy
solution of compressible Euler equations  with finite initial mass
converges strongly in the natural $L^1$ topology  to a fundamental solution of porous media equation (PME) with time-dependent diffusion, called by generalized Barenblatt solution. 
It is interesting that the $L^1$ decay rate is getting faster and faster as $\lambda$ increases  in $(0, \frac{\gamma}{\gamma+2}]$, while is getting slower and slower   in $[ \frac{\gamma}{\gamma+2}, 1)$.   

\end{abstract}

\section{Introduction}
In this paper, we  are concerned with the  compressible
Euler equations with time-dependent damping as follows:
\begin{align}\label{E:1.1}
\begin{cases}
 \rho_t+(\rho u)_x=0,\\
(\rho u)_t+(\rho u^2+p(\rho))_x=-\frac{1}{(1+t)^{\lambda}}\rho u,
\end{cases}
\end{align}
with finite initial mass
\begin{align}\label{E:1.2}
(\rho,u)(x,0)=(\rho_0,u_0)(x),\hspace{0.2cm}\rho_0(x)\geq 0,
\hspace{0.2cm} \int_{\mathbf{R}}\rho_0(x)\ dx= M>0,
\end{align}
where $\rho, u$ and $p=\kappa\rho^{\gamma}, \kappa=\frac{(\gamma-1)^2}{4\gamma} (1<\gamma<3)$, denote respectively the density, velocity,
and pressure, the damping term $\frac{\mu}{(1+t)^{\lambda}}\rho u$ with physical parameter $  0<\lambda <1$,  is the time-dependent friction effect.  We also use momentum $m=:\rho u$ in what follows for convenience.

For the usual damping model, i.e., $\lambda=0$, due to the damping effect of the frictional force term, the inertial terms in the momentum equation decay to zero faster than the other terms so that the pressure gradient force can be balanced by the damping, stated as Darcy law, see Hsiao and Liu
\cite{Hsiao-Liu-1}. It is conjectured that the system
\eqref{E:1.1}  is time-asymptotically equivalent to the following
decoupled system
\begin{align}\label{E:1.3}
\begin{cases}
 \bar{\rho}_t=\kappa(\bar \rho^{\gamma})_{xx}, \\
\bar{m}=-\kappa (\bar \rho^{\gamma})_{x},
\end{cases}
\end{align}
where $\eqref{E:1.3}_2$ is the famous  Darcy law and $\eqref{E:1.3}_1$ is the porous medium equation (PME),  
which admits a self-similar profile $\bar{\rho}(\frac{x}{\sqrt{1+t}})$ satisfying $\bar{\rho}(\pm\infty)=\rho_\pm>0$, $\rho_+\ne \rho_-$.  The conjecture was first justified in \cite{Hsiao-Liu-1}  when the initial data is a small perturbation 
around the background profile $\bar{\rho}$ and thus away from vacuum. Since then, 
 there have been various works for small initial data scattered in the literature, cf.
\cite{ Hsiao-Liu-2,Hsiao-Luo,  Nishihara, Nishihara-Wang-Yang,Zhao,Zheng}. For large initial data, it is observed in \cite{Liu-Yang-1997, Liu-Yang-2000} that the damping can not prevent the shock formation and thus the weak solution has to be considered. Along this direction, the above conjecture was justified in \cite{Huang-Pan-03,Huang-Pan-06, Serre-Xiao, Zhu-03}. 

As pointed out  by Liu in \cite{Liu}, 
it is very interesting to study the finite mass case, in which the solution is supposed to tend asymptotically to the Barenblatt solution $\bar{\rho}$ with the same mass, which is a fundamental solution of PME. Indeed, Liu in \cite{Liu} constructed a special smooth solution to approximate the Barenblatt solution. The general case was studied in  \cite{Huang-Marcati-Pan} for  $L^\infty$ entropy solution and the convergence rate toward the Barenblatt solution in $L^p\, (p>1)$ norm was also obtained.

Since the compressible
Euler equation is conserved, it is natural to measure the difference between the solution $\rho$
and the Barenblatt solution $\bar{\rho}$  in $L^1$ norm. 
 It is further shown in  \cite{Huang-Pan-Wang} that for $1<\gamma<3$ and any  $\varepsilon>0$, 
\begin{align}\label{E:1.41}
	\|(\rho- \bar{\rho})(\cdot,t)\|_{L^1}\leq
	C(1+t)^{-\frac{1}{4(\gamma+1)}+\varepsilon}.
\end{align}
 Recently, Geng and Huang \cite{Geng-Huang} improved the rate \eqref{E:1.41}  as $2\leq\gamma<3$ and extended the work of \cite{Huang-Pan-Wang} to the case of $\gamma\geq3$.

For the time-dependent damping, i.e., $ 0<\lambda<1$, 
although the damping effect  is getting weaker and weaker as time $t$ increases, the inertial terms in the momentum equation still decay to zero faster than the other terms so that the pressure gradient force can be balanced by the time-dependent damping, and the system \eqref{E:1.1} should be time-asymptotically equivalent to 
\begin{align}\label{E:1.3new}
	\begin{cases}
		\bar{\rho}_t=\kappa(1+t)^{\lambda}(\bar \rho^{\gamma})_{xx}, \\
		\bar{m}=-\kappa(1+t)^{\lambda}(\bar \rho^{\gamma})_{x},
	\end{cases}
\end{align}
where $\eqref{E:1.3new}_1$ is PME with time-dependent diffusion. In fact, it was proved in   \cite{Cui-Yin-Zhang-zhu, Li-Li-Mei-Zhang} that the system \eqref{E:1.1} admits a smooth solution which asymptotically tends to the diffusion wave $\bar\rho$ of $\eqref{E:1.3new}_1$ under some smallness conditions of initial data.  An interesting work for partially large initial data was studied in  \cite{Chen-Li-Li-Mei-Zhang}.  For other related works, we refer to \cite{Pan-1,Pan-2, Sugiyama-1,Sugiyama-2} and reference therein.  

Similar to the case of $\lambda=0$, it is also significant to study the large time behavior of entropy solution of the system \eqref{E:1.1} with finite mass. We will try to obtain the $L^1$ convergence rate of $\|\rho-\bar{\rho}\|_{L^1}$ for  $L^\infty$ weak entropy
solution $\rho$ to  \eqref{E:1.1} without any smallness restriction of initial data, where $\bar\rho$ is the fundamental solution of $\eqref{E:1.3new}_1$, denoted by generalized Barenblatt solution. 

Before stating the main work, let us first recall the definition of $L^\infty$ weak entropy solution to \eqref{E:1.1}.

\begin{definition}\label{D:1.4}
$(\rho, m)(x,t)\in L^{\infty}$ is called an entropy solution of
\eqref{E:1.1} and \eqref{E:1.2}, if for any non-negative test
function $\phi \in \mathcal{D}(\mathbf{R_{+}})$, it holds that
\begin{align}\label{E:1.5}
\begin{cases}
\int\int_{t>0} (\rho\phi_t+m\phi_x)\ dxdt+\int_{\mathbf{R}} \rho_0(x)\phi(x,0)\ dx=0,\\
\int\int_{t>0} [  m\phi_t+(\frac{m^2}{\rho}+p(\rho))\phi_x- \frac{1}{(1+t)^{\lambda}}m\phi]\
dxdt+\int_{\bf{R}} m_0(x)\phi(x,0)\ dx=0,{\tiny }\nonumber
\end{cases}
\end{align}
and
\begin{equation}\label{E:1.51}
 \eta_{t}+q_{x}+ \frac{ \eta_mm}{(1+t)^{\lambda}} \leq0
\end{equation}
\end{definition}
\noindent in the sense of distributions, where  $(\eta, q)$ is any
weak convex entropy-flux pair $(\eta(\rho,m), q(\rho,m))$ satisfying
\begin{equation}\label{E:1.51-1}
 \nabla q=\nabla \eta \nabla f, \hspace{0.2cm} f=\Big(m, \frac{m^2}{\rho}+\kappa\rho^\gamma\Big)^t, \hspace{0.2cm} \eta(0,0)=0.
\end{equation}


Without the damping term, i.e., the right hand side of \eqref{E:1.1} is zero, Diperna \cite{Diperna} first proved the global existence of $L^\infty$ entropy solutions with arbitrarily large initial data by the theory of compensated compactness 
  for $\gamma=1+\frac{2}{2n+1}$, with any integer $n \geq 2$. Subsequently, Ding et al. \cite{Ding-Chen-Luo} and Chen \cite{Chen} successfully extended the result to $\gamma\in(1, \frac{5}{3}]$. Lions et al. \cite{Lions-Perthame-Tadmor} and \cite{Lions-Perthame-Souganidis} treated the 
  case $\gamma>\frac{5}{3}$. Following the arguments of \cite{Chen,Ding-Chen-Luo,Diperna,Lions-Perthame-Tadmor,Lions-Perthame-Souganidis}, it is not difficult to prove the global existence of entropy solution to the system \eqref{E:1.1} for any bounded initial data, see also \cite{Huang-Pan-03}.

\

The precise statement of  our main result is
\begin{theorem}\label{theorem 1.1}
Suppose that $\rho_0(x)\in L^1(\mathbf{R})\cap
L^\infty(\mathbf{R}), u_0(x)\in L^\infty(\mathbf{R})$ and
\begin{equation} M=\int_{\mathbf{R}} \rho_0(x)\ dx>0.\nonumber
\end{equation}
Let $\lambda\in(0,1)$, $\gamma \in (1,3)$ and $(\rho,m)$  be the $L^{\infty}$ entropy
solution of the Cauchy problem $\eqref{E:1.1},\eqref{E:1.2}$. Let
$\bar{\rho}$ be the generalized Barenblatt solution of 
$\eqref{E:1.3new}_1$ with mass $M$. Let \begin{equation}
\label{E:1.8} y=-\int_{-\infty}^{x}(\rho- \bar{\rho})(r,t)\ dr.
\nonumber
\end{equation}
If $y(x,0)\in L^2(\mathbf{R})$, then for any $\varepsilon >0$   and
$t>0$, it holds that
 \begin{align}\label{E:1.9}
\|(\rho- \bar{\rho})(\cdot,t)\|_{L^{\gamma+1}}^{\gamma+1}\leq C(1+t)^{-\mu(\varepsilon)},
\end{align}
where 
\begin{equation*}
\mu(\varepsilon)=
\left\{
\begin{array}{ll}
1+\lambda-\frac{\lambda+1}{2(\gamma+1)}-\varepsilon, &\quad \lambda\in(0,\frac{\gamma}{\gamma+2}],\\
\frac{3}{2}+\frac{\lambda}{2}-\frac{\lambda+1}{\gamma+1}-\varepsilon, &\quad \lambda \in [\frac{\gamma}{\gamma+2},1).
\end{array}
\right.
\end{equation*}
Furthermore, 
 \begin{align}\label{E:1.9-1}
\|(\rho- \bar{\rho})(\cdot,t)\|_{L^1}\leq C(1+t)^{-\alpha(\varepsilon)},
\end{align}
with
	\begin{equation}\label{1.11}
\alpha(\varepsilon)=
\left\{
\begin{array}{ll}
\frac{\lambda+1}{4(\gamma+1)}-\varepsilon, &\quad \lambda\in(0,\frac{\gamma}{\gamma+2}]\\
\frac{1-\lambda}{4}-\varepsilon, &\quad \lambda \in [\frac{\gamma}{\gamma+2},1).\\
\end{array}
\right.
\end{equation}
\end{theorem}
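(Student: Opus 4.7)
My plan follows the anti-derivative / weighted energy approach pioneered by Huang--Pan--Wang, adapted to the time-dependent damping. Introduce
\[
y(x,t):=-\int_{-\infty}^{x}(\rho-\bar\rho)(r,t)\,dr,
\]
which vanishes at $\pm\infty$ because $\rho$ and $\bar\rho$ carry the same finite mass $M$. Integrating the mass equation of \eqref{E:1.1} minus $\eqref{E:1.3new}_1$ in $x$ yields $y_t=m+\kappa(1+t)^\lambda(\bar\rho^\gamma)_x$; differentiating once more in $t$ and substituting the momentum equation of \eqref{E:1.1} produces the damped wave-type equation
\[
y_{tt}+\frac{y_t}{(1+t)^\lambda}+\Bigl(\frac{m^2}{\rho}\Bigr)_x+\bigl(p(\rho)-p(\bar\rho)\bigr)_x=\kappa\lambda(1+t)^{\lambda-1}(\bar\rho^\gamma)_x+\kappa(1+t)^\lambda(\bar\rho^\gamma)_{xt},
\]
which has the same structure as in the $\lambda=0$ case of \cite{Huang-Pan-Wang}, only with a time-dependent friction coefficient and a time-dependent diffusion in the Barenblatt source.

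The core of the proof will be a family of weighted $L^2$ energy estimates for $y$. Multiplying by suitable linear combinations of $y$ and $y_t$ with time weights of the form $(1+t)^\sigma$, integrating over $\mathbf{R}$, and using
\[
-\int y_x\bigl(p(\rho)-p(\bar\rho)\bigr)\,dx=\int(\rho-\bar\rho)\bigl(p(\rho)-p(\bar\rho)\bigr)\,dx\geq c\int|\rho-\bar\rho|^{\gamma+1}\,dx
\]
(strict convexity of $p(\rho)=\kappa\rho^\gamma$) isolates $\int|\rho-\bar\rho|^{\gamma+1}$ as the main dissipation. The nonlinear flux $(m^2/\rho)_x$ is absorbed with help of the entropy dissipation $\int\!\!\int\frac{\rho u^2}{(1+t)^\lambda}\,dxdt$ extracted from \eqref{E:1.51}, and the Barenblatt source is controlled by the explicit self-similar decay $\|\bar\rho(\cdot,t)\|_{L^\infty}\leq C(1+t)^{-(1+\lambda)/(\gamma+1)}$ together with $\mathrm{supp}\,\bar\rho(\cdot,t)\subset\{|x|\leq C(1+t)^{(1+\lambda)/(\gamma+1)}\}$, both coming from the time rescaling $\tau=\frac{(1+t)^{1+\lambda}-1}{1+\lambda}$ that reduces $\eqref{E:1.3new}_1$ to the standard PME. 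To reach the sharp exponent $\mu(\varepsilon)$, I iterate: the integrated $L^{\gamma+1}$ control obtained in one round is fed back into a new estimate with a strictly heavier weight $(1+t)^{\sigma'}$, and the procedure is repeated. The split at $\lambda=\gamma/(\gamma+2)$ is exactly where the two competing scales balance --- the growth $(1+t)^{\lambda-1}$ in the Barenblatt source versus the decay $(1+t)^{-\lambda}$ in the friction --- so the iteration saturates against different error terms in the two regimes, which explains the broken formula for $\mu$, and hence for $\alpha$.

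The $L^1$ bound \eqref{E:1.9-1} will follow from \eqref{E:1.9} by interpolating on the effective support of $\rho-\bar\rho$. Since $\bar\rho(\cdot,t)$ is supported in a set of length at most $C(1+t)^{(1+\lambda)/(\gamma+1)}$ and mass conservation forces $\int_{\{\bar\rho=0\}}\rho\,dx\leq\int_{\{\bar\rho>0\}}|\rho-\bar\rho|\,dx$, H\"older's inequality combined with the $L^2$ estimate of $y$ obtained in the same energy argument converts the $L^{\gamma+1}$ rate $\mu(\varepsilon)$ into the $L^1$ rate $\alpha(\varepsilon)$.

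The main obstacle throughout will be the precise bookkeeping of time weights in the iteration: every integration by parts or Cauchy--Schwarz introduces a factor of $(1+t)^{\pm\lambda}$, and the sharp exponents in \eqref{E:1.9} and \eqref{1.11} are attained only by choosing the weight in each multiplier optimally, with the optimum differing on either side of $\lambda=\gamma/(\gamma+2)$; this is the ``elaborate iterative method'' announced in the abstract. A secondary issue is that $(\rho,m)$ is only a weak entropy solution, so every manipulation of the $y$-equation must be justified through the distributional entropy inequality \eqref{E:1.51} rather than pointwise identities, in the compensated-compactness tradition of \cite{Chen,Ding-Chen-Luo,Diperna,Lions-Perthame-Souganidis,Lions-Perthame-Tadmor}.
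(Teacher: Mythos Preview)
Your high-level architecture --- anti-derivative $y$, damped wave equation, weighted energy with the $(\rho-\bar\rho)(p(\rho)-p(\bar\rho))\ge c|\rho-\bar\rho|^{\gamma+1}$ dissipation, iteration on the time weight, then $L^{\gamma+1}\to L^1$ via the Barenblatt support --- does coincide with the paper's. But the concrete mechanism you propose for closing the basic estimate has a genuine gap, and it is exactly the place where the paper departs from Huang--Pan--Wang.

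First, the multiplier $y_t$ cannot be used. Multiplying the wave equation by $y_t$ produces $\int(m^2/\rho)_x\,y_t\,dx$ and $\kappa\int(\rho^\gamma-\bar\rho^\gamma)_x\,y_t\,dx$; neither has a sign, and for a merely $L^\infty$ entropy solution neither can be made sense of (this is not a ``secondary issue'' fixable by invoking \eqref{E:1.51} after the fact). The paper never touches $y_t$ as a multiplier. What replaces it is a \emph{second} weak entropy $\tilde\eta$ obtained from $g(\xi)=|\xi|^{2\gamma/(\gamma-1)}$, whose static part is $C_1\rho^{\gamma+1}+C_2 m^2+A(\rho,m)$ with $A\ge0$, and whose entropy inequality gives dissipation in $\int m^2$ (not $m^2/\rho$). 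The iteration is then driven not by re-weighting the $y$-equation but by the \emph{relative} entropy $\eta_*=\tilde\eta-C_1\bar\rho^{\gamma+1}-C_1(\gamma+1)\bar\rho^\gamma(\rho-\bar\rho)$, whose inequality (after the cancellation $2\kappa C_2=C_1(\gamma+1)$) yields directly the weighted control of $\int|\rho-\bar\rho|^{\gamma+1}$, $\int m^2$ and $\int y_t^2$ at each step. Your plan contains no analogue of $\tilde\eta$ or $\eta_*$.

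Second, even with the $y$-multiplier alone, your claim that the flux is ``absorbed with help of the entropy dissipation $\int\!\!\int(1+t)^{-\lambda}\rho u^2$'' fails for $\lambda>1/\gamma$. After integration by parts the flux contributes $\int\!\!\int(m^2/\rho)\bar\rho$, and since $\bar\rho\sim(1+t)^{-(\lambda+1)/(\gamma+1)}$ the mechanical-energy bound suffices only when $(\lambda+1)/(\gamma+1)\ge\lambda$, i.e.\ $\lambda\le 1/\gamma$. For $\lambda\in(1/\gamma,1)$ --- which contains the entire range $\lambda\ge\gamma/(\gamma+2)$ where the second branch of $\mu(\varepsilon)$ appears --- the paper needs an additional weighted estimate (its Lemma~3.4): one multiplies the $\eta_e$-inequality by $(1+t)^a$ with $a<1-1/\gamma$, splits the $x$-integration into $\{\rho\le(1+t)^{a-1}/\nu\}$ and its complement, and on the large-$\rho$ set trades $\rho^{\gamma}$ for $|\rho-\bar\rho|^{\gamma+1}$ which is then fed back into the main dissipation with a small constant. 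An analogous splitting (Lemma~3.3) is needed to get $(1+t)^\lambda\int y_t^2$ bounded. These region-splitting arguments are the new ingredient for $\lambda>0$ and are absent from your proposal.

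Finally, the $L^1$ step in the paper is not H\"older against the $L^2$ norm of $y$; it is Cauchy--Schwarz with the degenerate weight $\bar\rho^{\gamma-1}$ on an interior set $\Omega_0$ that stops a distance $(1+t)^{-b}$ short of the free boundary, so that $\int_{\Omega_0}\bar\rho^{1-\gamma}\,dx$ is only logarithmically large. The factor $(\int\bar\rho^{\gamma-1}|\rho-\bar\rho|^2)^{1/2}$ is then controlled by the $L^{\gamma+1}$ rate via Lemma~2.3.
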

\begin{remark}
	Theorem 1.1 shows that any $L^\infty$ entropy solution of  \eqref{E:1.1} satisfying
	the conditions in Theorem \ref{theorem 1.1} must converge to the generalized  Barenblatt
	solution of \eqref{E:1.3new} with the same mass. Although there is no uniqueness for
	the solutions, Theorem \ref{theorem 1.1} indicates the unique asymptotic profile of solution, determined
	by the initial mass.   
\end{remark}
\begin{remark} In terms of \eqref{E:1.9-1} and \eqref{1.11}, 
    the $L^1$ decay rate is getting faster and faster as $\lambda$ increases  in $(0, \frac{\gamma}{\gamma+2}]$, while is getting slower and slower   in $[ \frac{\gamma}{\gamma+2}, 1)$.  
\end{remark}

 We now sketch the main strategy.  Different from the case of $\lambda=0$, the main difficulties come from the interaction of pressure,  time-dependent diffusion  and vacuum.  In fact,  it was observed in  \cite{Geng-Lin-Mei}  for  $\lambda=1$ that the solution to  $\eqref{E:1.3new}_1$ is no longer the asymptotic profile of the solutions to system \eqref{E:1.1}.   Since the damping is getting weaker and weaker as $t$ increases, the methods developed in  \cite{Geng-Huang,Huang-Marcati-Pan, Huang-Pan-Wang} do not work anymore.   More precisely,
a nonlinear wave equation \eqref{E:2.12} for $y$ is applied to get 
 \begin{align}\label{b1}
 	\int_{\mathbf{R}}\frac{ y^2}{(1+t)^{\lambda}}dx+\int_0^t\int_{\mathbf{R}}|\rho-\bar\rho|^{\gamma+1}dxd\tau\leq C+C\int_0^t\int_{\mathbf{R}}\frac{m^2}{\rho}\bar{\rho}dxd\tau+C(1+t)^{\lambda}\int_{\mathbf{R}}y_t^2dx.
 \end{align}
  As $\lambda=0$,  the two terms on the right hand side of \eqref{b1} are controlled 
 by the energy inequality 
 \begin{eqnarray}\label{energy}
 \int_\mathbf{R}\eta_e dx+\int_0^t\int_\mathbf{R} \frac{m^2}{\rho}dxd\tau\le C,
 \end{eqnarray}
see \cite{Huang-Pan-Wang}, where $\eta_e=\frac{m^2}{2}+\frac{\kappa}{\gamma-1}\rho^\gamma$ is the mechanical energy. When $\frac{1}{\gamma}<\lambda<1$, \eqref{energy} becomes 
 \begin{eqnarray}\label{newenergy}
 \int_\mathbf{R}\eta_e dx+\int_0^t\int_\mathbf{R} (1+\tau)^{-\lambda}\frac{m^2}{\rho}dxd\tau\le C,
 \end{eqnarray}
 so that the terms $\int_0^t\int_{\mathbf{R}}\frac{m^2}{\rho}\bar{\rho}dxd\tau$ and $(1+t)^{\lambda}\int_{\mathbf{R}}y_t^2dx$ in \eqref{b1} are difficult to be estimated. We will use two kinds of entropies $\eta_e$ and $\tilde \eta$ (see \eqref{E:2.9-1}) to deal with the two terms. By the energy inequality and a careful decomposition of the integral region, we can get
 \begin{align}\label{w-0-0}
		\int_0^t\int_{\bf{R}}(1+\tau)^{a-\lambda}\frac{m^2}{\rho}dxd\tau
		\leq C+\nu_1\int_0^t\int_{\bf{R}}|\rho-\bar\rho|^{\gamma+1}dxd\tau,\quad \forall\, a\in(0,1-\frac{1}{\gamma}),
	\end{align} 
where $\nu_1$ is a small constant, see Lemma \ref{lemma-1}. When $a$ is close to $1-\frac{1}{\gamma}$, the term $\int_0^t\int_{\mathbf{R}}\frac{m^2}{\rho}\bar{\rho}dxd\tau$ is controlled by $\int_0^t\int_{\bf{R}}(1+\tau)^{a-\lambda}\frac{m^2}{\rho}dxd\tau$, which can be absorbed into the left hand side of \eqref{b1} due to \eqref{w-0-0}. In the same argument,  $(1+t)^{\lambda}\int_{\mathbf{R}}y_t^2dx$ can be estimated by using another entropy $\tilde \eta$, see Lemma \ref{lemma-2}, so that the basic estimates 
	\begin{align}\label{w-3.36-0}
		&(1+t)^{\lambda}\int_{\mathbf{R}}m^2dx+(1+t)^{\lambda}\int_{\mathbf{R}}|\rho-\bar \rho|^{\gamma+1} dx
	+\int_0^t\int_{\mathbf{R}}|\rho-\bar\rho|^{\gamma+1}dxd\tau \leq C
			\end{align}
are derived in Lemma \ref{lemma3.5}. 

Since the generalized Barenblatt solution $\bar \rho$ is compactly supported, the rate of $\|\rho-\bar \rho\|_{L^{1}}$ can be transformed by a fast decay rate of $\|\rho-\bar \rho\|_{L^{\gamma+1}}$. However, the rate $(1+t)^{-\lambda}$ of $\|\rho-\bar \rho\|_{L^{\gamma+1}}$ in \eqref{w-3.36-0} is not fast enough.  Note that a relative entropy $\eta_{\ast}$ for $\tilde \eta$ can be applied to get a better decay rate of $\|\rho-\bar \rho\|_{L^{\gamma+1}}$ as in \cite{Huang-Pan-Wang},
	\begin{align}\label{3.41-0}
		&(1+t)^{\mu_1}\int_{\mathbf{R}}m^2dx+(1+t)^{\mu_1}\int_{\mathbf{R}}|\rho-\bar \rho|^{\gamma+1}dx+	\int_0^t\int_{\mathbf{R}}(1+\tau)^{\mu_1-\lambda}y_t^2dxd\tau\leq C,
		\end{align}
		where $\mu_1>\lambda$.
Although the rate of $\|\rho-\bar \rho\|_{L^{\gamma+1}}$ is improved, it is still not fast enough to achieve the goal. To get the desired rate of $\|\rho-\bar \rho\|_{L^{\gamma+1}}$, 
an elaborate iterative method for the rate of $\|\rho-\bar \rho\|_{L^{\gamma+1}}$ is introduced. In fact,   \eqref{3.41-0} provides two new estimates for $(1+t)^{\mu_1}\int_{\mathbf{R}}m^2dx$ and $\int_0^t\int_{\mathbf{R}}(1+\tau)^{\mu_1-\lambda}y_t^2dxd\tau$, which, together with the wave equation \eqref{E:2.12}, imply the following inequality,
	\begin{align}\label{3.49-0}
		\int_0^t\int_{\mathbf{R}}(1+\tau)^{\theta_1}|\rho-\bar \rho|^{\gamma+1}dxd\tau \leq C,\quad \theta_1 >0.
		\end{align}
Note that \eqref{3.49-0}  is better than that in \eqref{w-3.36-0}. With the help of  the relative entropy method, we can further improve  \eqref{3.41-0} to 
\begin{align}\label{3.41-0-0}
		&(1+t)^{\mu_2}\int_{\mathbf{R}}m^2dx+(1+t)^{\mu_2}\int_{\mathbf{R}}|\rho-\bar \rho|^{\gamma+1}dx+	\int_0^t\int_{\mathbf{R}}(1+\tau)^{\mu_2-\lambda}y_t^2dxd\tau\leq C, 
		\end{align}
where $\mu_2>\mu_1$ is a constant. Repeating the same procedure by finitely many times,  finally we can get the desired decay rate of $\|\rho-\bar{\rho}\|_{L^{\gamma+1}}$, from which the  rate of $\|\rho-\bar \rho\|_{L^{1}}$ in Theorem \ref{theorem 1.1} is obtained.

\

The arrangement of the present paper is as follows. In Section 2,
 the generalized Barenblatt solution is constructed and its explicit formula is given. The $L^1$ decay estimates are obtained in Section 3.

\

\noindent{\bf Notations}. \ \ Throughout this paper the symbol $C$
 will be used to represent a generic constant which is
independent of $x$ and $t$ and may vary from line to line.
$\|\cdot\|_{L^p}$ stands for the
$L^p(\mathbf{R})$-norm $(1\leq p\leq \infty)$. The $L^2$-norm on $\mathbf{R}$ is simply
denoted by $\|\cdot\|$. Moreover, the domain $\mathbf{R}$ will be
often abbreviated without confusions.

\section{Generalized Barenblatt Solutions}

In this section, we first give the explicit formula of the generalized Barenblatt solution, which is the fundamental solution  of the following porous medium equation with time-dependent damping
\begin{align}\label{E:1.31}
	\begin{cases}
		\bar{\rho}_t=\kappa(1+t)^{\lambda}( \bar{\rho}^{\gamma})_{xx}, \\
		\bar{\rho}(0,x)=M\delta (x),\hspace{0.5cm} M>0.
	\end{cases}
\end{align}

\begin{lemma} \label{lemma2.1}
There exists one and only one solution $\bar{\rho}(x,t)$ to \eqref{E:1.31} taking the form:
\begin{equation} \label{E:2.2}
\bar \rho=(1+t)^{-\frac{\lambda+1}{\gamma+1}}\{(A-B\xi^2)_+\}^{\frac{1}{\gamma-1}},
\end{equation}
where $\xi=x(1+t)^{-\frac{\lambda+1}{\gamma+1}}, (f)_{+}=\max\{0,f\},
B=\frac{(\lambda+1)(\gamma-1)}{2\kappa \gamma(\gamma+1)}$ and $A$ is given by
\begin{equation} \label{E:2.3}
M=2\sqrt{\frac{A}{B}}A^{\frac{1}{\gamma-1}}\int_0^1(1-y^2)^{\frac{1}{\gamma-1}}dy.\nonumber
\end{equation}
Furthermore, 
\begin{itemize} \item [(1)] $\ \bar{\rho}(x,t)$ is continuous on $\mathbf{R}$.
 \item [(2)] There is a number $b=\sqrt{\frac{A}{B}}>0$, such that  $\bar{\rho}(x,t)>0$ if $|x|<b(1+t)^{\frac{\lambda+1}{\gamma+1}}$; and $\bar{\rho}(x,t)=0$ if $|x|\geq bt^{\frac{\lambda+1}{\gamma+1}}$.
 \item[(3)] $ \bar{\rho}(x,t)$ is smooth if $|x|<bt^{\frac{\lambda+1}{\gamma+1}}$.
\end{itemize} \end{lemma}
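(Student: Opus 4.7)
The plan is to find $\bar\rho$ by a self-similar ansatz and derive uniqueness by rescaling time.

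\textbf{Step 1 (self-similar ansatz).} I would seek a solution of the form $\bar\rho(x,t)=(1+t)^{-\alpha}f(\xi)$ with $\xi=x(1+t)^{-\beta}$. Mass conservation $\int_{\mathbf{R}}\bar\rho\,dx=M$ forces $\alpha=\beta$. Substituting into $\bar\rho_t=\kappa(1+t)^\lambda(\bar\rho^\gamma)_{xx}$ and matching powers of $(1+t)$ gives $-\alpha-1=\lambda-\alpha(\gamma+2)$, hence
\[
\alpha=\beta=\frac{\lambda+1}{\gamma+1},
\]
and the profile $f$ satisfies the ODE $-\alpha(\xi f)'=\kappa(f^\gamma)''$.

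\textbf{Step 2 (solve the ODE).} Integrating once and using the symmetry $f(-\xi)=f(\xi)$ to kill the constant of integration yields $-\alpha\xi f=\kappa(f^\gamma)'$, which rewrites as $-\alpha\xi=\frac{\kappa\gamma}{\gamma-1}(f^{\gamma-1})'$ wherever $f>0$. A second integration produces $f^{\gamma-1}=A-B\xi^2$ with $B=\frac{(\lambda+1)(\gamma-1)}{2\kappa\gamma(\gamma+1)}$; extending $f$ by zero past its support gives the weak solution \eqref{E:2.2}. The change of variable $\xi=\sqrt{A/B}\,y$ in $M=\int f(\xi)\,d\xi$ pins down $A$ via the stated formula. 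Finally, one checks that $\bar\rho(\cdot,t)\to M\delta$ in $\mathcal{D}'(\mathbf{R})$ as $t\to 0^+$, using the conservation of mass together with the collapse of the support $|\xi|\le\sqrt{A/B}$ in original $x$-coordinates to a point at $t=0$.

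\textbf{Step 3 (uniqueness).} To handle uniqueness among all distributional solutions with a Dirac initial datum, introduce the time change $\tau=\int_0^t(1+s)^\lambda\,ds=\frac{(1+t)^{\lambda+1}-1}{\lambda+1}$. Then $\bar\rho_\tau=\kappa(\bar\rho^\gamma)_{xx}$ is the standard porous medium equation, with initial value $M\delta(x)$ at $\tau=0$. The classical Barenblatt--Pattle source-type solution is the unique fundamental solution (Kamin, Pierre), and inverting the time change transports uniqueness back to \eqref{E:1.31}, while simultaneously confirming that the formula \eqref{E:2.2} is indeed that fundamental solution.

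\textbf{Step 4 (properties).} Items (1)--(3) are direct consequences of the explicit formula: since $\gamma>1$ the map $s\mapsto (s)_+^{1/(\gamma-1)}$ is continuous, giving (1); setting $b=\sqrt{A/B}$, the profile is positive precisely on $\{|\xi|<b\}$ and vanishes on $\{|\xi|\ge b\}$, which gives (2) after writing $|\xi|<b$ as $|x|<b(1+t)^{(\lambda+1)/(\gamma+1)}$; smoothness on the interior of the support in (3) is immediate since $A-B\xi^2>0$ there. The main subtlety of the whole proof is really the uniqueness statement, which needs PME uniqueness theory for measure-valued initial data; the time-rescaling to the standard PME is the device that makes this an off-the-shelf consequence of known results rather than a new technical hurdle.
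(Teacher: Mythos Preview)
Your construction (Steps 1, 2, 4) is essentially identical to the paper's: the same self-similar ansatz, the same determination of the exponent $\frac{\lambda+1}{\gamma+1}$ by matching powers, the same integration of the profile ODE to $f^{\gamma-1}=A-B\xi^2$, and the same fixing of $A$ via the mass constraint. The paper uses compact support of $f$ (rather than your symmetry argument) to kill the first integration constant, but this is a cosmetic difference.

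Where you genuinely go further is Step 3. The paper does not actually prove uniqueness among all weak solutions; its argument only shows there is a unique self-similar profile of the stated form, and then gestures at the weak-solution literature in a remark. Your time-change $\tau=\frac{(1+t)^{\lambda+1}-1}{\lambda+1}$ reducing to the autonomous PME and invoking source-type uniqueness is a cleaner and more complete treatment of the ``one and only one'' claim. One caveat: with the $(1+t)$ shift built into \eqref{E:2.2}, the profile at $t=0$ is the smooth bump $\{(A-Bx^2)_+\}^{1/(\gamma-1)}$, not $M\delta$; the Dirac mass sits at $t=-1$. So your sentence ``$\bar\rho(\cdot,t)\to M\delta$ as $t\to0^+$'' is not literally correct for the formula as written---the paper's own statement of \eqref{E:1.31} is loose on this point, and your time-change argument should be phrased with the singularity at $\tau$ corresponding to $t=-1$, or equivalently by first deriving the $t$-version and then shifting.
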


\begin{remark}
Since the derivative of $\bar{\rho}$ is
	not continuous across the interface between the gas and vacuum, $\bar{\rho}$ is understood as a weak solution to \eqref{E:1.31}, see \cite{Aronson,Barenblatt,Brezis-Crandall} for the definition of weak solution.
\end{remark}
\begin{proof} Following \cite{Barenblatt}, we seek for the solution $\bar\rho$ in the form of
	\begin{equation}\label{2.6}
	\bar \rho=(1+t)^{-s}f(\xi),
	\end{equation}
where $f$ is a smooth function with a compact support, $\xi=x(1+t)^{-s}$ and $s$ is a positive constant to be determined. Substituting \eqref{2.6} into \eqref{E:1.31} implies 
	\begin{equation}\label{2.7}
	s(1+t)^{-s-1}(f+\xi f_{\xi})+(1+t)^{\lambda}(1+t)^{-s(\gamma+2)}(\kappa f^{\gamma})_{\xi\xi}=0.
	\end{equation}	
Then we have
	\[ -s-1=-s(\gamma+2)+\lambda,\]
	which gives $s=\frac{\lambda+1}{\gamma+1}$ and 
	\begin{align*}
	 \Big(\frac{\lambda+1}{\gamma+1}\xi f+\kappa f^{\gamma}_{\xi}\Big)_{\xi}=0.
	\end{align*}
We deduce from the compact support of $f(\xi)$ that 
	 \begin{align*}
 f^{\gamma-1}=A-B\xi^2,
	\end{align*}
	where \[B=\frac{(\lambda+1)(\gamma-1)}{2\kappa \gamma(\gamma+1)}.\]	
Thus we conclude that
	\begin{equation}\label{2}
	\bar \rho=(1+t)^{-\frac{\lambda+1}{\gamma+1}}\{(A-B\xi^2)_+\}^{\frac{1}{\gamma-1}},\quad \xi=\frac{x}{(1+t)^\frac{\lambda+1}{\gamma+1}},
	\end{equation}
	which concides with the Barenblatt solution as $\lambda=0$.
On the other hand, the mass conservation \eqref{E:1.31} implies that 
	\[\int \bar \rho dx=\int \bar \rho(0,x)dx=M,\]
	which urges us to specify the value of $A$ by
	\begin{align*}
		M=2\int_0^{\sqrt{\frac{A}{B}}}(A-Bx^2)^{\frac{1}{\gamma-1}}dx
		=2\sqrt{\frac{A}{B}}A^{\frac{1}{\gamma-1}}\int_0^1(1-y^2)^{\frac{1}{\gamma-1}}dy.
	\end{align*}
In terms of the explicit form of $\bar{\rho}$, it is straightforward to check that  the properties (1)-(3)  hold. Thus, the proof of Lemma \ref{lemma2.1} is completed.
	\end{proof}

Furthermore, we list some properties of $\bar \rho$ as follows, which can be directly obtained by the explicit formula \eqref{E:2.2}.
\begin{lemma}\label{lemma2.2}
It holds that
\begin{align}\label{E:2.6}
\begin{split}
 &\|\bar{\rho}(\cdot,t)\|_{L^\infty}\leq C(1+t)^{-\frac{\lambda+1}{\gamma+1}},\\
 &\|(\bar{\rho}^{\gamma-1})_x(\cdot,t)\|_{L^\infty}\leq C(1+t)^{-\frac{(\lambda+1)\gamma}{\gamma+1}},\hspace{0.5cm}\|(\bar{\rho}^{\gamma-1})_t(\cdot,t)\|_{L^\infty}\leq C(1+t)^{-\frac{2\gamma+\lambda(\gamma-1)}{\gamma+1}},\\
&\|(\bar{\rho}^{\gamma})_x(\cdot,t)\|_{L^\infty}\leq
C(1+t)^{-1-\lambda},\hspace{0.5cm}\|(\bar{\rho}^{\gamma})_t(\cdot,t)\|_{L^\infty}\leq
C(1+t)^{-\frac{2\gamma+1+\lambda \gamma}{\gamma+1}},
\end{split}
\end{align}
and
\begin{align}\label{E:2.7}
\begin{split}
 &\|\bar{\rho}(\cdot,t)\|_{L^p}\leq C(1+t)^{-\frac{(\lambda+1)(p-1)}{p(\gamma+1)}}, \ \forall p\geq 1,\\
 &\|(\bar{\rho}^{\gamma-1})_x(\cdot,t)\|\leq C(1+t)^{-\frac{(\lambda+1)(2\gamma-1)}{2(\gamma+1)}},\hspace{0.3cm}\|(\bar{\rho}^{\gamma-1})_t(\cdot,t)\|\leq C(1+t)^{-\frac{4\gamma-1+(2\gamma-3)\lambda}{2(\gamma+1)}},\\
&\|(\bar{\rho}^{\gamma})_x(\cdot,t)\|\leq
C(1+t)^{-\frac{(2\gamma+1)(\lambda+1)}{2(\gamma+1)}},\hspace{0.3cm}\|(\bar{\rho}^{\gamma})_t(\cdot,t)\|\leq
C(1+t)^{-\frac{4\gamma+1+(2\gamma-1)\lambda}{2(\gamma+1)}}.
\end{split}
\end{align}
\end{lemma}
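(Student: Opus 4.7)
The plan is to reduce every bound to an elementary computation starting from the explicit formula \eqref{E:2.2}. Writing $s:=\frac{\lambda+1}{\gamma+1}$, we have the self-similar representation $\bar\rho(x,t)=(1+t)^{-s}F(\xi)$ with $\xi=x(1+t)^{-s}$ and $F(\xi)=\{(A-B\xi^2)_+\}^{1/(\gamma-1)}$, supported on $[-b,b]$ with $b=\sqrt{A/B}$. Three ingredients do all the work: (i) the chain rules $\xi_x=(1+t)^{-s}$ and $\xi_t=-s\xi/(1+t)$; (ii) the change of variable $x=(1+t)^s\xi$, contributing a Jacobian $(1+t)^s$ when one passes from an $x$-integral to a $\xi$-integral; and (iii) the observation that $F$, $F^{\gamma-1}=(A-B\xi^2)_+$, $F^\gamma=(A-B\xi^2)_+^{\gamma/(\gamma-1)}$ together with all their $\xi$-derivatives that appear below are uniformly bounded with compact support on $[-b,b]$.

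For the $L^\infty$ estimates in \eqref{E:2.6}, I would differentiate the particularly clean identities $\bar\rho^{\gamma-1}=(1+t)^{-s(\gamma-1)}(A-B\xi^2)_+$ and $\bar\rho^\gamma=(1+t)^{-s\gamma}F(\xi)^\gamma$. Each spatial derivative contributes one factor $\xi_x=(1+t)^{-s}$, producing the prefactor $(1+t)^{-s\gamma}$ for $(\bar\rho^{\gamma-1})_x$ and $(1+t)^{-s(\gamma+1)}=(1+t)^{-(\lambda+1)}$ for $(\bar\rho^\gamma)_x$. A time derivative contributes $(1+t)^{-1}$ either through the explicit prefactor or through $\xi_t$; since $|\xi|\le b$ uniformly on the support, both contributions have the same size and add harmlessly. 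Collecting the powers of $(1+t)$ reproduces the four rates in \eqref{E:2.6}.

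The $L^p$ and $L^2$ bounds in \eqref{E:2.7} follow from the same scaling together with the change of variable. For instance,
\[
\|\bar\rho(\cdot,t)\|_{L^p}^p=(1+t)^{s-sp}\int_{-b}^{b}F(\xi)^p\,d\xi=C(1+t)^{-s(p-1)},
\]
giving the stated exponent after dividing by $p$. Squaring the pointwise bounds for the derivatives obtained above and again invoking the Jacobian $(1+t)^s$ reduces each $L^2$ norm to a finite $\xi$-integral, after which taking square roots yields exactly the exponents announced in \eqref{E:2.7}.

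There is no substantive obstacle in the proof, since \eqref{E:2.2} is completely explicit. The only place demanding any attention is the $t$-differentiation of $\bar\rho^{\gamma-1}$ and $\bar\rho^\gamma$, where one must combine the contribution from differentiating the explicit time factor with that from $\xi_t$. Since both contributions produce the same overall power of $(1+t)$, no cancellation or competition between terms arises and the rates can be read off by elementary arithmetic with exponents.
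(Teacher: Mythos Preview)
Your approach is correct and is exactly what the paper does: it states that the estimates ``can be directly obtained by the explicit formula \eqref{E:2.2}'' and gives no further argument. Your sketch supplies precisely the elementary self-similar scaling computation that this sentence is pointing to.
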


\section{$L^1$ Convergence Estimates}

This section is devoted to the $L^1$ decay rate. We first give an invariant region theorem for
$L^\infty$ entropy solution to \eqref{E:1.1}, \eqref{E:1.2} for $\lambda=0$ due to \cite{Huang-Pan-Wang}. 
\begin{theorem} [\cite{Huang-Pan-Wang}]\label{theorem2.1}
Let $\lambda=0$, assume that $(\rho_0, u_0)(x)\in L^\infty(\mathbf{R})$ satisfies
\begin{equation}\label{E:2.13}
0\leq \rho_0(x)\leq C,\hspace{0.3cm}| m_0(x)|\leq
C\rho_0(x).\nonumber
\end{equation}
Let $(\rho, u)\in L^\infty(\mathbf{R}\times[0,T])$ be the $L^\infty$
 entropy solution of the system  \eqref{E:1.1}, \eqref{E:1.2}
with $\gamma>1$. Then $(\rho, m)$ satisfies
\begin{equation*} 
0\leq \rho(x,t) \leq C, \hspace{0.3cm} |m(x,t)|\leq C\rho(x,t),
\end{equation*}
where the constant $C$ depends solely on the initial data.
\end{theorem}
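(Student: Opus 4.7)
The plan is to adapt the Chueh--Conley--Smoller invariant-region technique to the damped $p$-system via vanishing viscosity. First I would introduce the Riemann invariants
\begin{equation*}
w = u + \tfrac{2\sqrt{\kappa\gamma}}{\gamma-1}\rho^{(\gamma-1)/2},\qquad z = u - \tfrac{2\sqrt{\kappa\gamma}}{\gamma-1}\rho^{(\gamma-1)/2},
\end{equation*}
so that nonnegativity $\rho\ge 0$ corresponds to $w\ge z$, and the conjunction $0\le\rho\le C$, $|m|\le C\rho$ (that is, $|u|\le C$ on $\{\rho>0\}$) is equivalent to $\|w\|_{L^\infty}+\|z\|_{L^\infty}\le C$. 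The initial hypotheses $0\le\rho_0\le C_0$ and $|m_0|\le C_0\rho_0$ translate into $\max(|w(\cdot,0)|,|z(\cdot,0)|)\le C_1$ for some $C_1$ depending only on the initial data, so it suffices to propagate this bound for $w,z$.

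Next I would consider the artificial-viscosity approximation obtained by adding $\varepsilon\rho_{xx}$ and $\varepsilon m_{xx}$ to the right-hand sides of \eqref{E:1.1} with $\lambda=0$. A direct computation for smooth solutions gives
\begin{align*}
w_t + (u+c)w_x - \varepsilon w_{xx} &= -u + \varepsilon N^\varepsilon[w,z],\\
z_t + (u-c)z_x - \varepsilon z_{xx} &= -u + \varepsilon L^\varepsilon[w,z],
\end{align*}
where $c=\sqrt{\kappa\gamma}\rho^{(\gamma-1)/2}$ and the commutator remainders $N^\varepsilon, L^\varepsilon$ vanish at any interior extremum of $w$ (resp.\ $z$). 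The decisive observation is that the damping contribution $-u=-(w+z)/2$ points inward on the boundary of the rectangle $\Sigma_{C_1}=\{|w|\le C_1,\ |z|\le C_1\}$: on the face $\{w=C_1\}$ one has $-u=-(C_1+z)/2\le 0$ because $z\ge -C_1$, and symmetrically on the remaining faces. By the parabolic maximum principle in Chueh--Conley--Smoller form, $\Sigma_{C_1}$ is positively invariant for the viscous system uniformly in $\varepsilon>0$ and $t\ge 0$, so the resulting $L^\infty$ bound depends neither on $\varepsilon$ nor on $T$.

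Finally, the uniform bound $(\rho^\varepsilon,m^\varepsilon)\in\Sigma_{C_1}$ permits passage to the limit $\varepsilon\to 0^+$ through the compensated-compactness framework of DiPerna, Ding--Chen--Luo, Chen, and Lions--Perthame--Tadmor/Souganidis. This yields a.e.\ convergence of a subsequence to an entropy solution $(\rho,m)$ of \eqref{E:1.1}--\eqref{E:1.2}, and the a.e.\ bounds $0\le\rho\le C$, $|m|\le C\rho$ pass to the limit. The main technical obstacle is that the Riemann invariants are only Hölder continuous across the vacuum set $\{\rho=0\}$, so the characteristic reasoning above is not literally valid there. I would handle this by reformulating the invariant-region argument through the weak entropy inequality \eqref{E:1.51} applied to smooth convex weak entropies of $\chi$-function type whose level sets approximate $\Sigma_{C_1}$; the damping contribution in \eqref{E:1.51} carries the correct sign on these entropies to preserve the invariance, and on the vacuum set both $\rho$ and $m$ vanish identically so the bounds are automatic.
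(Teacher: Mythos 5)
The paper itself does not prove Theorem~\ref{theorem2.1}: it is quoted verbatim from \cite{Huang-Pan-Wang}, with Remark~3.1 noting that the same argument extends to $\lambda>0$. Your reconstruction---passing to Riemann invariants $w=u+\rho^{\theta}$, $z=u-\rho^{\theta}$, regularizing by artificial viscosity, invoking the Chueh--Conley--Smoller positively-invariant rectangle after checking that the damping source $-u=-(w+z)/2$ points inward on each face, and then recovering the entropy solution and its a.e.\ bounds through the compensated-compactness limit---is precisely the mechanism behind the cited theorem (see also \cite{Huang-Pan-03}), so your approach is essentially the same as the one the paper relies upon.
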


\begin{remark}
	By the same argument of \cite{Huang-Pan-Wang}, Theorem \ref{theorem2.1} still holds for $\lambda>0$.
\end{remark}

Suppose that
$(\rho,m)$ is an entropy solution of \eqref{E:1.1}, \eqref{E:1.2}
satisfying the conditions in Theorem \ref{theorem 1.1}, then  $(\rho,m)$
satisfies
\begin{align}\label{E:2.7}
\begin{cases}
 \rho_t+m_x=0,\\
m_t+(\frac{ m^2}{\rho}+\kappa \rho^{\gamma})_x=-\frac{1}{(1+t)^{\lambda}}m.
\end{cases}
\end{align}
Let $\bar{\rho}$ be the Barenblatt solution of porous medium
equation \eqref {E:1.31} as in Lemma \ref{lemma2.1} carrying the same total mass $M$
 as $\rho$, and $\bar m=-\kappa (1+t)^{\lambda}\bar \rho^{\gamma}_x$. Then $(\bar{\rho}, \bar m)$ satisfies 
\begin{align}\label{E:2.1}
	\begin{cases}
		\bar{\rho}_t+\bar m_x=0, \\
		\kappa \bar \rho^{\gamma}_x=-\frac{1}{(1+t)^{\lambda}}\bar m.
	\end{cases}
\end{align}
Define
\begin{align}
w=\rho-\bar{\rho},\ \ z=m-\bar{m},\nonumber
\end{align}
which satisfies
\begin{align}\label{E:2.9}
\begin{cases}
 w_t+z_x=0,\\
z_t+(\frac{ m^2}{\rho})_x+\kappa (\rho^{\gamma}-\bar \rho^{\gamma})_x+\frac{1}{(1+t)^{\lambda}}z=- \bar{m}_t.
\end{cases}
\end{align}
Setting
\begin{equation}
y=-\int_{-\infty}^xw(r,t)\ dr,\nonumber
\end{equation}
we have
\begin{equation}
y_x=-w, \hspace{0.5cm} z=y_t.\nonumber
\end{equation}
Thus the equation \eqref{E:2.9} turns into a nonlinear wave equation
with source term, which is degenerate at vacuum:
\begin{equation}\label{E:2.12}
y_{tt}+(\frac{ m^2}{\rho})_x+\kappa (\rho^{\gamma}-\bar{\rho}^{\gamma})_x+\frac{1}{(1+t)^{\lambda}}y_t=-
\bar{m}_t.
\end{equation}

Before deriving the basic energy estimates, we give some important inequalities which provide sharp
information on the pressure near vacuum as below.
\begin{lemma} \label{lemma2.3}$($\cite{Huang-Marcati-Pan,
		Huang-Pan-Wang}$)$ Let $0\leq \rho,
	\bar{\rho}\leq C,$ there are two constants $c_1>0$ and  $c_2>0$ such
	that
	\begin{align*}
		\begin{split}
			(\rho^{\gamma}-\bar{\rho}^{\gamma})(\rho-\bar{\rho})&\geq |\rho-\bar{\rho}|^{\gamma+1}, \\
			c_1(\rho^{\gamma-1}+\bar{\rho}^{\gamma-1})(\rho-\bar{\rho})^2&\leq    \rho^{\gamma+1}-\bar \rho^{\gamma+1}-(\gamma+1)\bar \rho^{\gamma}(\rho-\bar{\rho})\\
			&\leq c_2(\rho^{\gamma-1}+\bar{\rho}^{\gamma-1})(\rho-\bar{\rho})^2,\\
			c_1(\rho^{\gamma-1}+\bar{\rho}^{\gamma-1})(\rho-\bar{\rho})^2&\leq(\rho^{\gamma}-\bar{\rho}^{\gamma})(\rho-\bar{\rho})\\
			&\leq c_2(\rho^{\gamma-1}+\bar{\rho}^{\gamma-1})(\rho-\bar{\rho})^2.
	\end{split}\end{align*}
\end{lemma}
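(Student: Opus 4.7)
The three inequalities are essentially independent; I would treat them separately.

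The first inequality, $(\rho^\gamma - \bar\rho^\gamma)(\rho - \bar\rho) \geq |\rho - \bar\rho|^{\gamma + 1}$, is symmetric in $\rho$ and $\bar\rho$, so WLOG $\rho \geq \bar\rho \geq 0$ and it reduces to the classical superadditivity $\rho^\gamma - \bar\rho^\gamma \geq (\rho - \bar\rho)^\gamma$ for $\gamma \geq 1$. I would prove this by fixing $\bar\rho \geq 0$ and letting $F(x) := (x + \bar\rho)^\gamma - x^\gamma - \bar\rho^\gamma$; since $F(0) = 0$ and $F'(x) = \gamma[(x + \bar\rho)^{\gamma - 1} - x^{\gamma - 1}] \geq 0$ on $[0,\infty)$ (as $\gamma - 1 \geq 0$), one gets $F(\rho - \bar\rho) \geq 0$, which is the claim.

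For the second and third inequalities my main tool is the integral form of Taylor's remainder applied to $f(x) = x^{\gamma + 1}$ and $g(x) = x^\gamma$, which after the change of variable $t = \bar\rho + \theta(\rho - \bar\rho)$ give
\begin{align*}
(\rho^\gamma - \bar\rho^\gamma)(\rho - \bar\rho) &= \gamma (\rho - \bar\rho)^2 \int_0^1 (\bar\rho + \theta(\rho - \bar\rho))^{\gamma - 1}\, d\theta, \\
\rho^{\gamma + 1} - \bar\rho^{\gamma + 1} - (\gamma + 1)\bar\rho^\gamma(\rho - \bar\rho) &= \gamma(\gamma + 1)(\rho - \bar\rho)^2 \int_0^1 (1 - \theta)(\bar\rho + \theta(\rho - \bar\rho))^{\gamma - 1}\, d\theta.
\end{align*}
Both problems are thus reduced to showing that a weighted average of $(\bar\rho + \theta(\rho - \bar\rho))^{\gamma - 1}$ over $[0, 1]$ is comparable to $\rho^{\gamma - 1} + \bar\rho^{\gamma - 1}$. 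The upper bounds are immediate: since $\bar\rho + \theta(\rho - \bar\rho)$ lies between $\min(\rho,\bar\rho)$ and $\max(\rho,\bar\rho)$, one has $(\bar\rho + \theta(\rho - \bar\rho))^{\gamma - 1} \leq (\rho + \bar\rho)^{\gamma - 1} \leq C_\gamma(\rho^{\gamma - 1} + \bar\rho^{\gamma - 1})$, using subadditivity of $x^{\gamma - 1}$ when $1 < \gamma \leq 2$ and the elementary $(a + b)^p \leq 2^{p - 1}(a^p + b^p)$ when $\gamma \geq 2$.

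The lower bounds are the main obstacle, because one must verify that the integral does not degenerate relative to $\rho^{\gamma - 1} + \bar\rho^{\gamma - 1}$ in the regimes where $\bar\rho$ is comparable to $\rho$, where one of them vanishes, or where $\rho - \bar\rho$ is small. My plan is a scaling/compactness argument: both sides of each inequality are homogeneous of degree $\gamma + 1$ in $(\rho, \bar\rho)$, so WLOG $\rho > 0$ and $\rho \geq \bar\rho$ (the reverse case is handled by the same computation after interchanging roles), and on setting $r := \bar\rho/\rho \in [0, 1]$ the $(\rho - \bar\rho)^2$ factors cancel and the ratio of the two sides becomes a function of $r$ alone on the compact interval $[0, 1]$. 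For the third inequality this ratio is
$$\Phi(r) = \frac{\gamma \int_0^1 (r + \theta(1 - r))^{\gamma - 1}\, d\theta}{1 + r^{\gamma - 1}},$$
which is continuous on $[0, 1]$ with $\Phi(0) = 1$ and $\Phi(1) = \gamma/2 > 0$, hence bounded below by a positive $c_1$ by compactness. The second inequality is treated identically through the analogous ratio carrying the weight $(1 - \theta)$, whose endpoint values $1$ and $\gamma(\gamma+1)/4$ are both positive. This produces the universal constants $c_1, c_2$ and completes the proof.
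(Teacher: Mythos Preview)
The paper does not prove this lemma at all; it merely cites \cite{Huang-Marcati-Pan,Huang-Pan-Wang} and moves on. Your argument is a correct self-contained proof along standard lines: superadditivity for the first inequality, Taylor's integral remainder plus a homogeneity/compactness argument for the other two. The constants you obtain depend only on $\gamma$, which is in fact stronger than what the hypothesis $0\le\rho,\bar\rho\le C$ might suggest.

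One point to tighten: your phrase ``the reverse case is handled by the same computation after interchanging roles'' is fine for the third inequality, which is symmetric in $(\rho,\bar\rho)$, but the second inequality is \emph{not} symmetric because of the term $(\gamma+1)\bar\rho^{\gamma}(\rho-\bar\rho)$. When $\bar\rho>\rho$ you cannot simply swap; instead normalize $\bar\rho=1$, $\rho=r\in[0,1]$ and run the same compactness argument on the resulting ratio
\[
\tilde\Psi(r)=\frac{\gamma(\gamma+1)\int_0^1(1-\theta)\bigl(1+\theta(r-1)\bigr)^{\gamma-1}\,d\theta}{1+r^{\gamma-1}},
\]
which is continuous on $[0,1]$ with $\tilde\Psi(0)=\gamma$ and $\tilde\Psi(1)=\gamma(\gamma+1)/4$. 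This is the same mechanism, just a separate case rather than a symmetry reduction; once stated, the proof is complete.
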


The remainder of this section will be devoted to the proof of Theorem \ref{theorem 1.1} and the proof is devided into three subsections. 

\subsection{Basic energy estimates}

According to \cite{Lions-Perthame-Tadmor}, all weak entropies of \eqref{E:1.51} are given as follows:
\begin{align*}
	\eta(\rho, u)&=\int g(\xi)\chi(\xi; \rho, u)d\xi=\rho\int_{-1}^1g(u+z\rho^{\theta})(1-z^2)^{l}dz,\\
	q(\rho,u)&=\int g(\xi)(\theta\xi+(1-\theta)u)\chi(\xi; \rho, u)d \xi \nonumber\\
	&=\rho\int_{-1}^1g(u+z\rho^{\theta})(u+\theta z\rho^{\theta})(1-z^2)^{l}dz,
\end{align*}
where $\theta=\frac{\gamma-1}{2}$, $l=\frac{3-\gamma}{2(\gamma-1)}$, $g(\xi)$ is any smooth function of $\xi$, and 
\[\chi(\xi;\rho, u)=(\rho^{\gamma-1}-(\xi-u)^2)_+^{l}.\]

To estimate $\|\rho-\bar{\rho}\|_{L^{\gamma+1}}$, we will use two kinds  of entropies by choosing $g(\xi)=\frac{1}{2}\xi^2$ and $g(\xi)=|\xi|^{\frac{2\gamma}{\gamma-1}}$ respectively. It is noted that when $g(\xi)=\frac{1}{2}\xi^2$, the entropy becomes mechanical energy
\begin{equation}\label{E:2.8-1}
\eta_e=\frac{m^2}{2\rho}+\frac{\kappa}{\gamma-1}\rho^{\gamma};
\end{equation}
when  $g(\xi)=|\xi|^{\frac{2\gamma}{\gamma-1}}$, the entropy reads as 
\begin{equation}\label{E:2.8-2}
\tilde{\eta}=\rho\int_{-1}^1|u+z\rho^{\theta}|^{\frac{2\gamma}{\gamma-1}}(1-z^2)^{l}dz\nonumber,
\end{equation}
which is used to measure the $L^{\gamma+1}$ norm  since the highest power  of $
\tilde \eta$ for density $\rho$ is $\gamma+1$.
By Taylor's expansion as in \cite{Huang-Pan-Wang}, we have
\begin{equation}\label{E:2.9-1}
\tilde{\eta}=C_1\rho^{\gamma+1}+C_2m^2+A(\rho,m),
\end{equation}
where 
\begin{align}
	A(\rho, m)=&\rho \int_{-1}^1\Big(g(u+z\rho^{\theta})-g(z\rho^{\theta})-g^{\prime}(z\rho^{\theta})u-\frac{1}{2}g^{\prime\prime}(z\rho^{\theta})u^2\Big)(1-z^2)^{l}dz,\nonumber\\
	=&\rho u^3\int_{-1}^1\int_0^1\frac{(1-s)^2}{2}g^{(3)}(su+z\rho^{\theta})(1-z^2)^{l}dsdz,\label{3.8}\\
 C_1=&\int_{-1}^1|z|^{\frac{2\gamma}{\gamma-1}}(1-z^2)^{\lambda}dz=\frac{1}{2}B(\frac{\gamma+1}{2(\gamma-1)},\frac{\gamma+1}{2(\gamma-1)}), \nonumber\\
	C_2=&\frac{\gamma(\gamma+1)}{(\gamma-1)^2}\int_{-1}^1|z|^{\frac{2}{\gamma-1}}(1-z^2)^{\lambda}dz=\frac{2\gamma(\gamma+1)}{(\gamma-1)^2}C_1\label{E:2.8-3}
\end{align}
and $B(p,q)$ is a Beta function defined by
\begin{equation}\label{E:2.8-4}
B(p,q)=\int_0^1x^{p-1}(1-x)^{q-1}dx.\nonumber
\end{equation}
From \eqref{E:1.51}, it holds that
\begin{equation}\label{en.2}
\eta_{et}+q_{ex}+\frac{ m^2}{(1+t)^{\lambda}\rho}\leq 0
\end{equation}
and 
\begin{equation}\label{en.1}
\tilde{\eta}_t+ \tilde{q}_x+\frac{2C_2 m^2}{(1+t)^{\lambda}}+\frac{A_mm}{(1+t)^{\lambda}}\leq 0
\end{equation}
in the sense of distributions, where $q_e$ and $\tilde{q}$ are the corresponding fluxes. 
Thus, by the theory of divergence-measure fields (see Chen and Frid \cite{Chen-Frid}),
we have
\begin{equation}\label{3.15}
\int_{\mathbf{R}}\eta_e dx+ \int_0^t\int_{\mathbf{R}}\frac{m^2}{(1+\tau)^{\lambda}\rho}dxd\tau\leq C
\end{equation}
and 
\begin{equation}\label{3.16}
\int_{\mathbf{R}}\tilde{\eta} dx+2C_2 \int_0^t\int_{\mathbf{R}}\frac{m^2}{(1+\tau)^{\lambda}}dxd\tau+ \int_0^t\int_{\mathbf{R}}\frac{A_mm}{(1+\tau)^{\lambda}}dxd\tau\leq C.
\end{equation}

\begin{lemma}\label{lemma3.2}
	For $\gamma\in(1,3)$, it holds that
	\begin{align*}
		A(\rho, m)\geq 0 \quad\mbox{and}\quad A_mm\geq3A(\rho,m)\geq 0.
	\end{align*}
\end{lemma}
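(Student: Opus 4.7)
The plan is to read off both inequalities directly from the Taylor-integral representation \eqref{3.8}. Writing
\[
A(\rho,m)=\rho u^{3}\,J(u,\rho),\qquad J(u,\rho):=\int_{-1}^{1}\!\!\int_{0}^{1}\frac{(1-s)^{2}}{2}\,g^{(3)}\!\bigl(su+z\rho^{\theta}\bigr)(1-z^{2})^{l}\,ds\,dz,
\]
with $u=m/\rho$ and $g(\xi)=|\xi|^{\alpha}$ for $\alpha:=\frac{2\gamma}{\gamma-1}$, I note that the hypothesis $\gamma\in(1,3)$ is exactly $\alpha>3$. Under this condition $g^{(3)}(\xi)=\alpha(\alpha-1)(\alpha-2)\,\mathrm{sgn}(\xi)|\xi|^{\alpha-3}$ is continuous and odd, while $g^{(4)}(\xi)=\alpha(\alpha-1)(\alpha-2)(\alpha-3)|\xi|^{\alpha-4}\ge 0$, with $|\xi|^{\alpha-4}$ locally integrable because $\alpha-4>-1$.

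First I will establish $J(u,\rho)\ge 0$ for $u\ge 0$. Evaluated at $u=0$, the integrand $g^{(3)}(z\rho^{\theta})(1-z^{2})^{l}$ is odd in $z$, so $J(0,\rho)=0$. Differentiating under the integral sign in $u$ (justified by local integrability of $g^{(4)}$, or alternatively by the smoothing $g_{\varepsilon}(\xi)=(\xi^{2}+\varepsilon)^{\alpha/2}$ together with dominated convergence),
\[
\partial_{u}J(u,\rho)=\int_{-1}^{1}\!\!\int_{0}^{1}\frac{s(1-s)^{2}}{2}\,g^{(4)}\!\bigl(su+z\rho^{\theta}\bigr)(1-z^{2})^{l}\,ds\,dz\;\ge\;0,
\]
since every factor is nonnegative. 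Hence $J(\cdot,\rho)$ is nondecreasing on $[0,\infty)$, so $J(u,\rho)\ge 0$ for $u\ge 0$. Because $\tilde{\eta}$ is even in $u$ (its defining integrand is invariant under $(u,z)\mapsto(-u,-z)$) and the subtracted polynomial $C_{1}\rho^{\gamma+1}+C_{2}m^{2}$ is even in $u$, $A$ itself is even in $u$; the factorization $A=\rho u^{3}J$ therefore forces $J$ to be odd in $u$. Combined with $J\ge 0$ on $[0,\infty)$ this yields $uJ(u,\rho)\ge 0$ for every $u$, whence $u^{3}J(u,\rho)\ge 0$ and $A\ge 0$.

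The stronger inequality $A_{m}m\ge 3A$ will then follow by simply differentiating the factored form. Since $m=\rho u$ at fixed $\rho$, $A_{m}m=uA_{u}|_{\rho}$, and
\[
A_{u}|_{\rho}=3\rho u^{2}J(u,\rho)+\rho u^{3}\partial_{u}J(u,\rho),\qquad uA_{u}|_{\rho}=3A+\rho u^{4}\partial_{u}J(u,\rho),
\]
so
\[
A_{m}m-3A=\rho u^{4}\,\partial_{u}J(u,\rho)\;\ge\;0
\]
by the preceding step, yielding $A_{m}m\ge 3A\ge 0$.

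The only technical subtlety will be the legitimacy of differentiating $J$ under the integral sign when $\alpha\in(3,4)$, in which regime $g^{(4)}$ has an integrable singularity along the codimension-one locus $\{su+z\rho^{\theta}=0\}\subset[0,1]\times[-1,1]$; a dominated-convergence argument based on the smoothing $g_{\varepsilon}$ above removes this obstruction cleanly. Everything else is routine differentiation and sign-tracking.
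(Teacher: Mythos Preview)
Your proof is correct and follows essentially the same route as the paper: both reduce everything to $g^{(4)}\ge 0$ for $\gamma\in(1,3)$, the paper writing $A$ directly as $\rho u^{4}$ times a nonnegative triple integral in $(s_1,s_2,z)$ (which is nothing but the integrated form of your identity $J(u)=J(0)+u\int_0^1 \partial_u J(s_2 u)\,ds_2$) and deriving $A_m m-3A=\rho u^{4}\partial_u J$ exactly as you do. Your parity detour is harmless but unnecessary, since $\partial_u J\ge 0$ holds for all $u\in\mathbf{R}$ and together with $J(0,\rho)=0$ already gives $uJ(u,\rho)\ge 0$ directly.
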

\begin{proof}
	By direct computations, $A(\rho,m)$ and $A_mm$ can be rewritten as 
	\begin{align*}
		A(\rho, m)=\rho u^4 \int_{-1}^1\int_{0}^1\int_{0}^1\frac{(1-s_1)^2s_2}{2}g^{(4)}(s_1s_2u+z\rho^{\theta})(1-z^2)^{l}d{s_1}d{s_2}dz\geq 0
	\end{align*}
	and 
	\begin{align*}
		A_mm=3A+\rho u^4 \int_{-1}^1\int_{0}^1\frac{(1-s)^2s}{2}g^{(4)}(su+z\rho^{\theta})(1-z^2)^{l}dsdz\geq 3A,
	\end{align*}
	where we have used the fact $g^{(4)}\geq 0$ for $\gamma\in(1,3)$. Thus, the proof of the Lemma \ref{lemma3.2} is completed.
\end{proof}

\begin{lemma}\label{lemma-2}
	Under the conditions of Theorem \ref{theorem 1.1}, it holds that 
	\begin{align}\label{w-3.21}
		&(1+t)^{\lambda}\int_{\bf{R}}\tilde{\eta} dx+\int_{0}^t\int_{\bf{R}}m^2dxd\tau+\int_{0}^t\int_{\bf{R}}A(\rho, m)dxd\tau
		\notag\\
		\leq& C+C\lambda\nu_1^{\frac{\gamma(1-\lambda)}{2\lambda}}\int_{0}^t\int_{\bf{R}}(\rho^{\gamma}-\bar \rho^{\gamma})(\rho-\bar\rho)dxd\tau,
		\end{align}
where $\nu_1$ is a positive constant to be determined.
	\end{lemma}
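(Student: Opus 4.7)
The plan is to exploit the weak entropy inequality \eqref{en.1} for the higher-order entropy $\tilde{\eta}$ after multiplication by the time weight $(1+t)^{\lambda}$, which cancels the $(1+\tau)^{-\lambda}$ in the dissipation and produces unweighted $m^2$ and $A_m m$ terms. Writing $(1+t)^{\lambda}\tilde{\eta}_t=\partial_t\bigl((1+t)^{\lambda}\tilde{\eta}\bigr)-\lambda(1+t)^{\lambda-1}\tilde{\eta}$ and integrating over $\mathbf{R}\times[0,t]$ via the Chen--Frid divergence-measure theory used in the derivation of \eqref{3.16}, I would obtain
\begin{align*}
(1+t)^{\lambda}\int\tilde{\eta}\,dx+\int_0^t\!\!\int\bigl(2C_2 m^2+A_m m\bigr)\,dxd\tau\leq C+\lambda\int_0^t(1+\tau)^{\lambda-1}\!\int\tilde{\eta}\,dxd\tau.
\end{align*}

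The right-hand side is partly self-referential through $\tilde{\eta}=C_1\rho^{\gamma+1}+C_2 m^2+A$. Using Lemma \ref{lemma3.2} ($A_m m\geq 3A$) on the left and the crude bound $\lambda(1+\tau)^{\lambda-1}\leq\lambda<1$ on the right, the $m^2$ and $A$ pieces of the self-reference are absorbed into the left-hand side dissipation with positive residual coefficients, reducing the problem to controlling $C_1\lambda\int_0^t(1+\tau)^{\lambda-1}\!\int\rho^{\gamma+1}\,dxd\tau$. For this term I would apply the expansion of Lemma \ref{lemma2.3},
\[
\rho^{\gamma+1}\leq \bar{\rho}^{\gamma+1}+(\gamma+1)\bar{\rho}^{\gamma}(\rho-\bar{\rho})+c_2(\rho^{\gamma-1}+\bar{\rho}^{\gamma-1})(\rho-\bar{\rho})^2,
\]
handling the quadratic remainder by $(\rho^{\gamma-1}+\bar{\rho}^{\gamma-1})(\rho-\bar{\rho})^2\leq C(\rho^{\gamma}-\bar{\rho}^{\gamma})(\rho-\bar{\rho})$ (Lemma \ref{lemma2.3} again) and splitting the linear cross term $\bar{\rho}^{\gamma}|\rho-\bar{\rho}|$ by a $\nu_1$-weighted Young's inequality. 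The resulting purely $\bar{\rho}$ contributions are controlled using Lemma \ref{lemma2.2}, in particular $\int\bar{\rho}^{\gamma+1}dx\leq C(1+\tau)^{-\gamma(\lambda+1)/(\gamma+1)}$; the corresponding time integral $\int_0^t(1+\tau)^{\lambda-1-\gamma(\lambda+1)/(\gamma+1)}d\tau$ converges because $\lambda<\gamma$, so it contributes only a ($\nu_1$-dependent) constant absorbed into $C$.

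The main technical difficulty is to calibrate the Young's inequality so that the coefficient of $\int_0^t\!\!\int(\rho^{\gamma}-\bar{\rho}^{\gamma})(\rho-\bar{\rho})\,dxd\tau$ comes out as exactly $C\lambda\nu_1^{\gamma(1-\lambda)/(2\lambda)}$. The exponent $\gamma(1-\lambda)/(2\lambda)$ has to be tuned against two competing constraints: the pointwise decay of $\bar{\rho}^{\gamma+1}$ from Lemma \ref{lemma2.2} (which limits how much time weight the $\bar\rho$-only piece can absorb), and the requirement that the $\nu_1$ power on the nonlinear pressure term remain positive, so that when this estimate is later combined with the wave-equation bound \eqref{b1} the pressure piece can be absorbed into $\int_0^t\!\!\int|\rho-\bar\rho|^{\gamma+1}$ on the left by choosing $\nu_1$ sufficiently small. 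Carrying out this bookkeeping together with the absorption from the preceding step yields the claimed inequality \eqref{w-3.21}.
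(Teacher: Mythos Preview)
Your opening two steps---multiplying \eqref{en.1} by $(1+t)^{\lambda}$, integrating, and absorbing the $m^2$ and $A$ contributions from $\lambda\int_0^t(1+\tau)^{\lambda-1}\!\int\tilde\eta\,dxd\tau$ into the left via Lemma~\ref{lemma3.2}---are exactly what the paper does. The divergence is in how you treat the remaining piece $C_1\lambda\int_0^t\!\int(1+\tau)^{\lambda-1}\rho^{\gamma+1}dxd\tau$, and there the proposal has a genuine gap.

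Your plan is to expand $\rho^{\gamma+1}$ via Lemma~\ref{lemma2.3} and push the smallness into the linear cross term $\bar\rho^{\gamma}|\rho-\bar\rho|$ by a $\nu_1$-weighted Young inequality. But the quadratic remainder $c_2(\rho^{\gamma-1}+\bar\rho^{\gamma-1})(\rho-\bar\rho)^2$ already carries the \emph{fixed} constant $c_2$ from Lemma~\ref{lemma2.3}; after bounding it by $C(\rho^{\gamma}-\bar\rho^{\gamma})(\rho-\bar\rho)$ you obtain a contribution $C\lambda\int_0^t\!\int(1+\tau)^{\lambda-1}(\rho^{\gamma}-\bar\rho^{\gamma})(\rho-\bar\rho)\,dxd\tau$ whose coefficient cannot be made small by any choice of $\nu_1$. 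No calibration of Young's inequality on the \emph{linear} term can repair this: the obstruction sits in the quadratic term. Consequently your method cannot produce the factor $\nu_1^{\gamma(1-\lambda)/(2\lambda)}$ in \eqref{w-3.21}, and when the estimate is fed into Lemma~\ref{lemma3.5} (where one must first take $\nu_2$ large to dominate $C_3(1+t)^{\lambda}\!\int y_t^2dx$ and only \emph{afterwards} shrink the pressure coefficient via $\nu_1$) the absorption $\kappa-C\nu_1-C\lambda\nu_1^{\gamma(1-\lambda)/(2\lambda)}\nu_2>\kappa/2$ would fail.

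The paper's device is different: it splits the spatial domain at the level set $\{\rho=\nu_1^{-1}(1+\tau)^{-2\lambda/\gamma}\}$. On the small-density region one uses $\rho^{\gamma+1}\le \nu_1^{-\gamma}(1+\tau)^{-2\lambda}\rho$ and mass conservation $\int\rho\,dx=M$, so the time integral $\int_0^t(1+\tau)^{-1-\lambda}d\tau$ is finite and this piece goes into the constant. On the large-density region the defining inequality yields $(1+\tau)^{\lambda-1}\le (\nu_1\rho)^{\gamma(1-\lambda)/(2\lambda)}\le C\nu_1^{\gamma(1-\lambda)/(2\lambda)}$, and \emph{this} is where the tunable small coefficient originates; one then bounds $\rho^{\gamma+1}\le 2^{\gamma+1}(|\rho-\bar\rho|^{\gamma+1}+\bar\rho^{\gamma+1})$ and uses $\int\bar\rho^{\gamma+1}dx\le C(1+\tau)^{-\gamma(\lambda+1)/(\gamma+1)}$ for the $\bar\rho$-part. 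The level-set trick, not a Young inequality, is what converts the time weight into the $\nu_1$-power.
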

\begin{proof}
		Multiplying $(1+t)^{\lambda}$ with \eqref{en.1} and integrating over $\mathbf{R}\times [0,t]$  give
	\begin{align}\label{w-4}
		&(1+t)^{\lambda}\int_{\bf{R}}\tilde{\eta} dx+2C_2\int_0^t\int_{\bf{R}}m^2dxd\tau+\int_0^t\int_{\bf{R}}A_mmdxd\tau
		\notag\\
		\leq &C_0+	\lambda C_1\int_0^t\int_{\bf{R}}(1+\tau)^{\lambda-1}\rho^{\gamma+1}dxd\tau+\lambda C_2\int_0^t\int_{\bf{R}}(1+\tau)^{\lambda-1}m^2dxd\tau
		\notag\\
		&+\lambda\int_0^t\int_{\bf{R}}(1+\tau)^{\lambda-1}A(\rho,m)dxd\tau.
	\end{align}
Since $A_mm\geq 3A$ and $\lambda\in(0,1)$, we only need to consider the second term on the right hand side of \eqref{w-4}, which is estimated by deviding the integral region into two parts as follows:
\begin{align}\label{w-3.23}
		&\int_0^t\int_{\bf{R}}(1+\tau)^{\lambda-1}\rho^{\gamma+1}dxd\tau
		\notag\\
		=& \int_0^t\int_{\{x:0\leq \rho\leq \frac{(1+\tau)^{-2\lambda\gamma^{-1}}}{\nu_1}\}}(1+\tau)^{\lambda-1}\rho^{\gamma+1}dxd\tau+\int_0^t\int_{\{x:\rho> \frac{(1+\tau)^{-2\lambda\gamma^{-1}}}{\nu_1}\}}(1+\tau)^{\lambda-1}\rho^{\gamma+1}dxd\tau
		\notag\\
		=&:I_1+I_2.
	\end{align}	
A direct computation implies
\begin{align}\label{w-3.24}
	I_1&=\int_0^t\int_{\{x:0\leq \rho\leq \frac{(1+\tau)^{-2\lambda\gamma^{-1}}}{\nu_1}\}}(1+\tau)^{\lambda-1} \rho^{\gamma+1}dxd\tau
\leq \frac{M}{\nu_1^{\gamma}}\int_0^t(1+\tau)^{-\lambda-1}d\tau
	\leq C
	\end{align}
and
\begin{align}\label{w-3.25}
	I_2&=\int_0^t\int_{\{x:\rho> \frac{(1+\tau)^{-2\lambda\gamma^{-1}}}{\nu_1}\}}(1+\tau)^{\lambda-1}\rho^{\gamma+1}dxd\tau
	\notag\\
	&\leq 2^{\gamma+1}\int_0^t\int_{\{x:\rho> \frac{(1+\tau)^{-2\lambda\gamma^{-1}}}{\nu_1}\}}(1+\tau)^{\lambda-1}|\rho-\bar \rho|^{\gamma+1}dxd\tau
	+2^{\gamma+1}\int_0^t\int_{\mathbf{R}}(1+\tau)^{\lambda-1}\bar \rho^{\gamma+1}dxd\tau
	\notag\\
	&\leq C\nu_1^{\frac{\gamma(1-\lambda)}{2\lambda}}\int_0^t\int_{\mathbf{R}}\rho^{\frac{\gamma(1-\lambda)}{2\lambda}}|\rho-\bar \rho|^{\gamma+1}dxd\tau
+C\int_0^t(1+\tau)^{-2+\frac{\lambda+1}{\gamma+1}}d\tau
	\notag\\
	&\leq C\nu_1^{\frac{\gamma(1-\lambda)}{2\lambda}}\int_0^t\int_{\mathbf{R}}|\rho-\bar \rho|^{\gamma+1}dxd\tau
	+C,
	\end{align}
where we have used the fact $-2+\frac{\lambda+1}{\gamma+1}<-1$ due to $\lambda\in(0,1)$.
Substituting \eqref{w-3.24}-\eqref{w-3.25} into \eqref{w-3.23}, we get
	\begin{align*}
		\int_0^t\int_{\bf{R}}(1+\tau)^{\lambda-1}\rho^{\gamma+1}dxd\tau
	\leq C\nu_1^{\frac{\gamma(1-\lambda)}{2\lambda}}\int_0^t\int_{\mathbf{R}}|\rho-\bar \rho|^{\gamma+1}dxd\tau
		+C,
	\end{align*}
which yields \eqref{w-3.21}.
Thus, the proof of Lemma \ref{lemma-2} is completed.
	\end{proof}

\begin{remark}
Note that $m=y_t+\bar m$, Lemma \ref{lemma2.2} implies
	\begin{align}\label{w-3.21-0}
		&(1+t)^{\lambda}\int_{\bf{R}}y_t^2dx
	+\int_{0}^t\int_{\bf{R}}y_t^2dxd\tau
		\leq C+C\lambda\nu_1^{\frac{\gamma(1-\lambda)}{2\lambda}}\int_{0}^t\int_{\bf{R}}(\rho^{\gamma}-\bar \rho^{\gamma})(\rho-\bar\rho)dxd\tau.
		\end{align}
\end{remark}

\begin{lemma}\label{lemma-1}
	For any  $\lambda\in(\frac{1}{\gamma},1)$, it holds that
	\begin{align}\label{w-0}
		&(1+t)^{a}\int_{\bf{R}}\eta_edx+(1-\frac{a}{2})\int_0^t\int_{\bf{R}}(1+\tau)^{a-\lambda}\frac{m^2}{\rho}dxd\tau
		\notag\\
		\leq& C
		(1+\nu_1^{1-\gamma})+\nu_1\int_0^t\int_{\bf{R}}(\rho^{\gamma}-\bar \rho^{\gamma})(\rho-\bar\rho)dxd\tau,\quad \forall \,a \in (0,1-\frac{1}{\gamma}),
	\end{align}
	where $\nu_1$ is the positive constant appeared in Lemma \ref{lemma-2} to be determined.
\end{lemma}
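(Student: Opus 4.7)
The argument should parallel the one used for Lemma \ref{lemma-2}, but applied to the mechanical energy $\eta_e$ in place of the entropy $\tilde\eta$. First I would multiply the dissipation inequality \eqref{en.2} by $(1+t)^a$ and integrate over $\mathbf{R}\times[0,t]$; using $((1+t)^a\eta_e)_t=(1+t)^a\eta_{et}+a(1+t)^{a-1}\eta_e$ and discarding the flux term by the divergence-measure theory of \cite{Chen-Frid}, this yields
\begin{equation*}
(1+t)^a\int_{\mathbf{R}}\eta_e\,dx+\int_0^t\!\!\int_{\mathbf{R}}(1+\tau)^{a-\lambda}\frac{m^2}{\rho}\,dxd\tau\leq C+a\int_0^t\!\!\int_{\mathbf{R}}(1+\tau)^{a-1}\eta_e\,dxd\tau,
\end{equation*}
so the entire task reduces to controlling the last term.

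Writing $\eta_e=\frac{m^2}{2\rho}+\frac{\kappa}{\gamma-1}\rho^\gamma$, I would treat the two pieces separately. For the kinetic piece, the factorization $(1+\tau)^{a-1}=(1+\tau)^{\lambda-1}(1+\tau)^{a-\lambda}$ together with $\lambda<1$ immediately gives
\begin{equation*}
a\int_0^t\!\!\int_{\mathbf{R}}(1+\tau)^{a-1}\frac{m^2}{2\rho}\,dxd\tau\leq\frac{a}{2}\int_0^t\!\!\int_{\mathbf{R}}(1+\tau)^{a-\lambda}\frac{m^2}{\rho}\,dxd\tau,
\end{equation*}
which can be absorbed into the dissipation on the left-hand side, producing the claimed coefficient $(1-\frac{a}{2})$.

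For the pressure piece, I would split $\mathbf{R}$ at the time-dependent threshold $\rho=(1+\tau)^{a-1}/\nu_1$. On the small-$\rho$ region the bound $\rho^{\gamma-1}\leq\nu_1^{-(\gamma-1)}(1+\tau)^{(a-1)(\gamma-1)}$ combined with mass conservation $\int\rho\,dx\leq M$ yields
\begin{equation*}
\int_0^t\!\!\int_{\{\rho\leq(1+\tau)^{a-1}/\nu_1\}}(1+\tau)^{a-1}\rho^\gamma\,dxd\tau\leq\frac{M}{\nu_1^{\gamma-1}}\int_0^t(1+\tau)^{(a-1)\gamma}\,d\tau\leq C\nu_1^{1-\gamma},
\end{equation*}
where the finiteness of the $\tau$-integral uses $(a-1)\gamma<-1$, that is, the hypothesis $a<1-1/\gamma$. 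On the large-$\rho$ region the constraint $\rho>(1+\tau)^{a-1}/\nu_1$ is equivalent to $1<\nu_1\rho(1+\tau)^{1-a}$, hence $(1+\tau)^{a-1}\rho^\gamma<\nu_1\rho^{\gamma+1}$. Splitting $\rho^{\gamma+1}\leq 2^\gamma(|\rho-\bar\rho|^{\gamma+1}+\bar\rho^{\gamma+1})$ and invoking Lemma \ref{lemma2.3} converts the first summand into a controlled multiple of $(\rho^\gamma-\bar\rho^\gamma)(\rho-\bar\rho)$, which delivers the last term in \eqref{w-0}. The residual $\bar\rho^{\gamma+1}$ contribution is estimated via Lemma \ref{lemma2.2} by $C\int_0^t(1+\tau)^{-\gamma(\lambda+1)/(\gamma+1)}\,d\tau$, finite exactly when $\gamma\lambda>1$, which is the standing hypothesis $\lambda>1/\gamma$.

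The main obstacle, and the reason the proof needs both constraints $a<1-1/\gamma$ and $\lambda>1/\gamma$, lies in the simultaneous calibration of this threshold: the exponent $a-1$ is forced by the requirement that the large-$\rho$ estimate produce $\nu_1|\rho-\bar\rho|^{\gamma+1}$ free of any time weight, and once the threshold is so fixed the two hypotheses are precisely what is needed to close the $\tau$-integrals on both sides of the split. After collecting all contributions and renaming $C\nu_1$ as $\nu_1$, one obtains \eqref{w-0}.
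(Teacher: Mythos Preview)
Your proposal is correct and follows essentially the same route as the paper: multiply \eqref{en.2} by $(1+t)^a$, absorb the kinetic part of $a\int_0^t\!\int(1+\tau)^{a-1}\eta_e$ via $(1+\tau)^{a-1}\le(1+\tau)^{a-\lambda}$, and split the pressure part at the threshold $\rho=(1+\tau)^{a-1}/\nu_1$, closing the small-$\rho$ side with $a<1-1/\gamma$ and the $\bar\rho^{\gamma+1}$ remainder with $\lambda>1/\gamma$. The only cosmetic difference is that the paper leaves the $I_4$ bound as $C\nu_1\int|\rho-\bar\rho|^{\gamma+1}$ and then invokes Lemma~\ref{lemma2.3} at the very end, whereas you mention the constant absorption explicitly.
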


\begin{proof}
	Multiplying $(1+t)^{a}$ with \eqref{en.2} and integrating over $\mathbf{R}\times [0,t]$  give
	\begin{align}\label{w-1}
		&	(1+t)^{a}\int_{\bf{R}}\eta_edx+\int_0^t\int_{\bf{R}}(1+\tau)^{a-\lambda}\frac{m^2}{\rho}dxd\tau
		\notag\\
		\leq &C+	a\int_0^t\int_{\bf{R}}(1+\tau)^{a-1}\eta_edxd\tau
		\notag\\
		\leq &C+	\frac{a}{2}\int_0^t\int_{\bf{R}}(1+\tau)^{a-\lambda}\frac{m^2}{\rho}dxd\tau+\frac{a\kappa}{\gamma-1}\int_0^t\int_{\bf{R}}(1+\tau)^{a-1}\rho^{\gamma}dxd\tau.
	\end{align}
Again deviding the integral region into two parts, we have
	\begin{align}\label{w-2}
		&\int_0^t\int_{\bf{R}}(1+\tau)^{a-1}\rho^{\gamma}dxd\tau
		\notag\\
		=&\int_0^t\int_{\{x:0\leq \rho\leq \frac{(1+\tau)^{a-1}}{\nu_1}\}}(1+\tau)^{a-1}\rho^{\gamma}dxd\tau+\int_0^t\int_{\{x:\rho> \frac{(1+\tau)^{a-1}}{\nu_1}\}}(1+\tau)^{a-1}\rho^{\gamma}dxd\tau
		\notag\\
		=&: I_3+I_4.
	\end{align}
	Choosing $a<1-\frac{1}{\gamma}$ yields 
	\begin{align}
		I_3&=\int_0^t\int_{\{x:0\leq \rho\leq \frac{(1+\tau)^{a-1}}{\nu_1}\}}(1+\tau)^{a-1}\rho^{\gamma}dxd\tau
		\notag\\
		&\leq \frac{1}{\nu_1^{\gamma-1}}\int_0^t(1+\tau)^{a-1}(1+\tau)^{(a-1)(\gamma-1)}\int_{\bf{R}}\rho dxd\tau
	\leq C\nu_1^{1-\gamma}
	\end{align}
	 and
	\begin{align}\label{w-3}
		I_4&=\int_0^t\int_{\{x:\rho> \frac{(1+\tau)^{a-1}}{\nu_1}\}}(1+\tau)^{a-1}\rho^{\gamma}dxd\tau
		\notag\\
		&\leq \nu_1\int_0^t\int_{\{x:\rho> \frac{(1+\tau)^{a-1}}{\nu_1}\}}\rho^{\gamma+1}dxd\tau
		\notag\\
		&\leq  C\nu_1\int_0^t\int_{\mathbf{R}}|\rho-\bar\rho|^{\gamma+1}dxd\tau+C\nu_1\int_0^t\int_{\mathbf{R}}\bar\rho^{\gamma+1}dxd\tau
		\notag\\
		&\leq C+C\nu_1\int_0^t\int_{\mathbf{R}}|\rho-\bar\rho|^{\gamma+1}dxd\tau,
	\end{align}
	where we have used the fact $-\lambda-1+\frac{\lambda+1}{\gamma+1}<-1$ due to
	$\lambda>\frac{1}{\gamma}$.
	
	Hence, substituing \eqref{w-2}-\eqref{w-3} into \eqref{w-1} and using Lemma \ref{lemma2.3} give \eqref{w-0}. Thus, the proof of Lemma \ref{lemma-1} is completed.
\end{proof}
 
With the help of Lemma \ref{lemma-2} and Lemma \ref{lemma-1}, we are ready to estimate $\|\rho-\bar{\rho}\|_{L^{\gamma+1}}$ as follows.
\begin{lemma}\label{lemma3.5}
	For any $\gamma \in (1,3)$ and $\lambda\in(0,1)$, it holds that
	\begin{align}\label{w-3.36}
		&(1+t)^{\lambda}\int_{\mathbf{R}}(y_t^2+m^2)dx+\frac{1}{(1+t)^{\lambda}}\int_{\mathbf{R}}y^2dx+(1+t)^{\lambda}\int_{\mathbf{R}}|\rho-\bar \rho|^{\gamma+1} dx
		\notag\\
		&+\int_0^t\int_{\mathbf{R}}\frac{1}{(1+\tau)^{\lambda+1}}y^2dxd\tau+\int_0^t\int_{\mathbf{R}}(y_t^2+m^2)dxd\tau+\int_0^t\int_{\mathbf{R}}Adxd\tau
		\notag\\
		&+\int_0^t\int_{\mathbf{R}}(\rho^{\gamma}-\bar \rho^{\gamma})(\rho-\bar\rho)dxd\tau \leq C.
			\end{align}
			\end{lemma}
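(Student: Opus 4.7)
The plan is to multiply the wave equation \eqref{E:2.12} by $y$, integrate over $\mathbf{R}\times[0,t]$, and perform integrations by parts in both $x$ and $t$. Using $y_x=-(\rho-\bar\rho)$ (and the fact that $y$ vanishes at $x=\pm\infty$ by mass conservation), the spatial IBP converts the flux terms into $\int m^2\,dx-\int\tfrac{m^2}{\rho}\bar\rho\,dx+\kappa\int(\rho^\gamma-\bar\rho^\gamma)(\rho-\bar\rho)\,dx$. Rewriting $\tfrac{1}{(1+t)^\lambda}\int yy_t\,dx=\partial_t\bigl(\tfrac{1}{2(1+t)^\lambda}\int y^2\bigr)+\tfrac{\lambda}{2(1+t)^{\lambda+1}}\int y^2$ and time-integrating then produces on the LHS the good quantities $\tfrac{1}{(1+t)^\lambda}\int y^2$, $\int_0^t\!\int\tfrac{y^2}{(1+\tau)^{\lambda+1}}$, $\int_0^t\!\int m^2$, and $\kappa\int_0^t\!\int(\rho^\gamma-\bar\rho^\gamma)(\rho-\bar\rho)$ — this is the schematic version of \eqref{b1} — at the price of four bad terms on the RHS: (i) the boundary contribution $\int yy_t\,dx$ at time $t$, (ii) the source $-\int_0^t\!\int y\bar m_\tau\,dxd\tau$, (iii) the pressure/vacuum coupling $\int_0^t\!\int\tfrac{m^2}{\rho}\bar\rho\,dxd\tau$, and (iv) the kinetic piece $\int_0^t\!\int y_t^2\,dxd\tau$.

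Next I would dispose of (i) and (ii) by Cauchy--Schwarz/Young with weights $(1+t)^{\pm\lambda}$ and the rewriting $-\int y\bar m_\tau=-\partial_\tau\!\int y\bar m+\int y_\tau\bar m$, using that $\|\bar m\|^2$ and $(1+t)^\lambda\|\bar m\|^2$ are bounded by Lemma \ref{lemma2.2}, with Young constants tuned so the bad side is absorbed into the good terms $\tfrac{\lambda}{2}\int_0^t\!\int y^2/(1+\tau)^{\lambda+1}$ and $\tfrac{1}{2(1+t)^\lambda}\int y^2$. For (iv) I write $y_t=m-\bar m$, so $y_t^2\le 2m^2+2\bar m^2$, and note that $\int_0^t\!\int m^2$ is controlled by Lemma \ref{lemma-2} (up to a $\nu_1$-small multiple of the pressure dissipation) while $\int_0^t\!\int\bar m^2\le C$ by Lemma \ref{lemma2.2}. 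The central term is (iii): for $\lambda\in(0,\tfrac{1}{\gamma}]$ I exploit $\bar\rho\le C(1+\tau)^{-(\lambda+1)/(\gamma+1)}\le C(1+\tau)^{-\lambda}$, so that the energy inequality \eqref{3.15} handles it directly; for $\lambda\in(\tfrac{1}{\gamma},1)$ I pick $a$ in the nonempty interval $\bigl(\max\{0,\tfrac{\lambda\gamma-1}{\gamma+1}\},\,1-\tfrac{1}{\gamma}\bigr)$, invoke Lemma \ref{lemma-1}, and use $\bar\rho(1+\tau)^{\lambda-a}\le C$ to dominate the integrand by the controlled dissipation $(1+\tau)^{a-\lambda}m^2/\rho$.

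Both Lemma \ref{lemma-2} and Lemma \ref{lemma-1} leave only $\nu_1$-small remainders of the form $\nu_1^{\#}\!\int_0^t\!\int(\rho^\gamma-\bar\rho^\gamma)(\rho-\bar\rho)$; choosing $\nu_1$ small enough absorbs these into the good $\kappa\int_0^t\!\int(\rho^\gamma-\bar\rho^\gamma)(\rho-\bar\rho)$ on the LHS and closes the first round of bounds. Plugging the now-controlled $\int_0^t\!\int(\rho^\gamma-\bar\rho^\gamma)(\rho-\bar\rho)$ back into Lemma \ref{lemma-2} yields $(1+t)^\lambda\!\int\tilde\eta\,dx\le C$, which via \eqref{E:2.9-1} and Lemma \ref{lemma3.2} gives $(1+t)^\lambda\!\int(\rho^{\gamma+1}+m^2)\,dx\le C$. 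The bound $(1+t)^\lambda\!\int|\rho-\bar\rho|^{\gamma+1}\,dx\le C$ then follows from $|\rho-\bar\rho|^{\gamma+1}\le 2^\gamma(\rho^{\gamma+1}+\bar\rho^{\gamma+1})$ together with $(1+t)^\lambda\!\int\bar\rho^{\gamma+1}\le C$ from Lemma \ref{lemma2.2} (valid since $\lambda<\gamma$), and $(1+t)^\lambda\!\int y_t^2\,dx\le C$ follows from $y_t^2\le 2m^2+2\bar m^2$ and the just-established $(1+t)^\lambda\!\int m^2$ bound.

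The hard part is the interaction of weak damping with vacuum when $\lambda\in(\tfrac{1}{\gamma},1)$: there the natural energy inequality provides only the weighted dissipation $\int_0^t\!\int(1+\tau)^{-\lambda}m^2/\rho$, which, combined with the sharp vacuum decay $\bar\rho\le C(1+\tau)^{-(\lambda+1)/(\gamma+1)}$, falls short of controlling $\int_0^t\!\int(m^2/\rho)\bar\rho$. The region-splitting mechanism at the threshold $\rho\sim\nu_1^{-1}(1+\tau)^{\#}$ built into Lemmas \ref{lemma-1} and \ref{lemma-2} is what converts this loss into a small multiple of the pressure dissipation $\int_0^t\!\int(\rho^\gamma-\bar\rho^\gamma)(\rho-\bar\rho)$, rendering the final absorption possible.
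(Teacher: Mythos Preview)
Your proposal is correct and follows essentially the same route as the paper: multiply \eqref{E:2.12} by $y$, handle $\int_0^t\!\int m^2\bar\rho/\rho$ via Lemma~\ref{lemma-1} (with the case split at $\lambda=1/\gamma$), handle the $y_t^2$-contributions via Lemma~\ref{lemma-2}/\eqref{w-3.21-0}, and choose $\nu_1$ small to absorb the pressure-dissipation remainders. The only cosmetic difference is that the paper combines $\int_0^t\!\int(m^2-y_t^2)$ with the source term through one more integration by parts (using $\bar m=-\kappa(1+t)^\lambda(\bar\rho^\gamma)_x$ and $m_x=-\rho_t$) to produce the $\bar\rho^\gamma\rho$-type integrals in \eqref{w-3.31}, and then adds $\nu_2\times\eqref{w-3.21}$ in a single step rather than plugging back in a second pass as you do.
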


\begin{proof}
	Multiplying \eqref{E:2.12} with $y$ and integrating over $\mathbf{R}\times [0,t]$ give
	\begin{align}\label{w-3.31}
		&\int_{\mathbf{R}}\Big(yy_t+\frac{y^2}{2(1+t)^{\lambda}}+\bar m y-(1+t)^{\lambda}\bar \rho^{\gamma}\rho\Big)dx+\int_0^t\int_{\mathbf{R}}\frac{\lambda y^2}{2(1+\tau)^{1+\lambda}}dxd\tau
		\notag\\
		&+\kappa \int_0^t\int_{\mathbf{R}}(\rho^{\gamma}-\bar \rho^{\gamma})(\rho-\bar \rho)dxd\tau +\lambda \int_0^t\int_{\mathbf{R}}(1+\tau)^{\lambda-1}\bar \rho^{\gamma}\rho dxd\tau 
		\notag\\
		=& C_0+\int_0^t\int_{\mathbf{R}}\frac{m^2\bar \rho}{\rho}dxd\tau-\int_0^t\int_{\mathbf{R}}(1+\tau)^{\lambda}\bar \rho_t^{\gamma}\rho dxd\tau
	:=C+I_5+I_6.
		\end{align}	
If $\lambda\leq \frac{1}{\gamma}$, then we see at once from \eqref{en.2} that 
\begin{align}
	I_5&=\int_0^t\int_{\mathbf{R}}\frac{m^2\bar \rho}{\rho}dxd\tau
\leq C\int_0^t\int_{\mathbf{R}}(1+\tau)^{-\frac{\lambda+1}{\gamma+1}}\frac{m^2}{\rho}dxd\tau
	\leq C\int_0^t\int_{\mathbf{R}}\frac{m^2}{(1+\tau)^{\lambda}\rho}dxd\tau
\leq C.\nonumber
	\end{align}
If $\lambda\in(\frac{1}{\gamma},1)$, we may choose $a\in (\frac{\lambda \gamma-1}{\gamma+1},1-\frac{1}{\gamma})$ in Lemma \ref{lemma-1}  to get
	\begin{align}
		I_5&=\int_0^t\int_{\mathbf{R}}\frac{m^2\bar \rho}{\rho}dxd\tau
	\leq C\int_0^t\int_{\mathbf{R}}(1+\tau)^{-\frac{\lambda+1}{\gamma+1}}\frac{m^2}{\rho}dxd\tau
		\notag\\
		&\leq C\int_0^t\int_{\mathbf{R}}\frac{(1+\tau)^{a-\lambda}m^2}{\rho}dxd\tau
		\leq C(1+\nu_1^{1-\gamma})+C\nu_1\int_0^t\int_{\bf{R}}(\rho^{\gamma}-\bar \rho^{\gamma})(\rho-\bar\rho)dxd\tau,\nonumber
	\end{align}
	where we have used the fact that $0<\frac{\lambda \gamma-1}{\gamma+1}<1-\frac{1}{\gamma}$.
	Therefore, there always holds that 
		\begin{align}\label{w-3.32}
		I_5\leq C(1+\nu_1^{1-\gamma})+C\nu_1\int_0^t\int_{\bf{R}}(\rho^{\gamma}-\bar \rho^{\gamma})(\rho-\bar\rho)dxd\tau.
	\end{align}
On the other hand, since ${\lambda}-\frac{(\lambda+1)\gamma}{\gamma+1}<0$, we have
\begin{align}
	|I_6|=\Big|\int_0^t\int_{\mathbf{R}}(1+\tau)^{\lambda}\bar \rho_t^{\gamma}\rho dxd\tau\Big|
\leq CM\int_0^t(1+\tau)^{{\lambda}-\frac{(\lambda+1)\gamma}{\gamma+1}-1}d\tau
\leq C.
	\end{align}

Finally, we consider the nonpositive terms of left hand side of \eqref{w-3.31}. It follows from Lemma \ref{lemma2.2} that
\begin{align}\label{w-3.34}
	&\int_{\mathbf{R}}\Big(yy_t+\bar m y-(1+t)^{\lambda}\bar \rho^{\gamma}\rho\Big)dx
	\notag\\
	\leq& \frac{1}{4(1+t)^{\lambda}}\int_{\mathbf{R}}y^2dx+C(1+t)^{\lambda}\int_{\mathbf{R}}y_t^2dx+C(1+t)^{\lambda}\int_{\mathbf{R}}\bar{m}^2dx+CM(1+t)^{\lambda-\frac{(\lambda+1)\gamma}{\gamma+1}}
	\notag\\
	\leq& \frac{1}{4(1+t)^{\lambda}}\int_{\mathbf{R}}y^2dx
	+C(1+t)^{\lambda}\int_{\mathbf{R}}y_t^2dx+C.
	\end{align}
Thus, substituting \eqref{w-3.32}-\eqref{w-3.34} into \eqref{w-3.31} leads to 
	\begin{align}\label{w-3.35}
	&\frac{1}{4(1+t)^{\lambda}}\int_{\mathbf{R}}y^2dx+\int_0^t\int_{\mathbf{R}}\frac{\lambda y^2}{2(1+\tau)^{1+\lambda}}dxd\tau+\lambda \int_0^t\int_{\mathbf{R}}(1+\tau)^{\lambda-1}\bar \rho^{\gamma}\rho dxd\tau 
	\notag\\
	&+(\kappa-C\nu_1)\int_0^t\int_{\mathbf{R}}(\rho^{\gamma}-\bar \rho^{\gamma})(\rho-\bar \rho)dxd\tau 
\leq C(1+\nu_1^{1-\gamma})
	+C_3(1+t)^{\lambda}\int_{\mathbf{R}}y_t^2dx.
\end{align}
To control the last term on the right hand side of \eqref{w-3.35}, taking $\eqref{w-3.35}+\nu_2 \eqref{w-3.21}$, together with \eqref{w-3.21-0}, gives 
	\begin{align}\label{w-3.36-1}
	&\frac{1}{4(1+t)^{\lambda}}\int_{\mathbf{R}}y^2dx+(\nu_2-C_3)(1+t)^{\lambda}\int_{\bf{R}}y_t^2dx+\nu_2 (1+t)^{\lambda}\int_{\bf{R}}\tilde{\eta} dx
	\notag\\
	&+\int_0^t\int_{\mathbf{R}}\frac{\lambda y^2}{2(1+\tau)^{1+\lambda}}dxd\tau+(2-\lambda)C_2\nu_2\int_{0}^t\int_{\bf{R}}m^2dxd\tau+(3-\lambda)\nu_2\int_{0}^t\int_{\bf{R}}Adxd\tau
	\notag\\
	&+\nu_2\int_{0}^t\int_{\bf{R}}y_t^2dxd\tau+(\kappa-C\nu_1-C\lambda\nu_1^{\frac{\gamma(1-\lambda)}{2\lambda}}\nu_2)\int_0^t\int_{\mathbf{R}}(\rho^{\gamma}-\bar \rho^{\gamma})(\rho-\bar \rho)dxd\tau 
\leq C(1+\nu_1^{1-\gamma}).
\end{align}
We choose $\nu_1>0$ and $\nu_2>0$ satisfying
\begin{align*}
	\begin{cases}
		\nu_2-C_3\geq \frac{1}{2}, \\
		\kappa-C\nu_1-C\lambda\nu_1^{\frac{\gamma(1-\lambda)}{2\lambda}}\nu_2 \geq \frac{\kappa}{2},
	\end{cases}
\end{align*}
 then \eqref{w-3.36} follows from 
 \eqref{w-3.36-1}.
  Thus, the proof of Lemma \ref{lemma3.5} is completed.
	\end{proof}

\subsection{Iteration for $\|\rho-\bar{\rho}\|_{L^{\gamma+1}}$} 
As explained in introduction, the rate $(1+t)^{-\lambda}$ of $\|\rho-\bar \rho\|_{L^{\gamma+1}}$ in Lemma \ref{lemma3.5} is not fast enough to derive that of $\|\rho-\bar \rho\|_{L^{1}}$.
This subsection is devoted to improving the rate of $\|\rho-\bar{\rho}\|_{L^{\gamma+1}}$ by designing an iteration method. Define
\begin{align}\label{w-3.35-1}
\eta_{\ast}&=\tilde{\eta}-C_1\bar \rho^{\gamma+1}-C_1(\gamma+1)\bar \rho^{\gamma}(\rho-\bar \rho).
\end{align}
The entropy inequality \eqref{en.1} implies
\begin{equation}\label{3.50-1}
\eta_{\ast t}+[C_1\bar \rho^{\gamma+1}+C_1(\gamma+1)\bar \rho^{\gamma}(\rho-\bar \rho)]_t+\tilde{q}_x+\frac{2 C_2m^2}{(1+t)^{\lambda}}+\frac{A_mm}{(1+t)^{\lambda}}\leq 0.
\end{equation}
From \eqref{E:2.8-3}, we have 
\begin{equation*}
2\kappa C_2= C_1(\gamma+1).
\end{equation*}
Note that
\begin{equation}\label{3.50-2}
(\bar {\rho}^{\gamma+1})_t=(\gamma+1)\bar{\rho}^{\gamma}\bar {\rho}_t=-\frac{(\gamma+1)}{\kappa(1+t)^{\lambda}}\bar{m}^2+\{\cdots\}_x,\nonumber
\end{equation}
where $\{\cdots\}_x$  denotes terms which vanish after integrating over $\mathbf{R}$. We thus update
\eqref{3.50-1} by
\begin{align}\label{3.50-3}
&\eta_{\ast t}+ C_1(\gamma+1)[\bar \rho^{\gamma}(\rho-\bar{\rho})]_t+\{\cdots\}_x+\frac{2 C_2}{(1+t)^{\lambda}}(m-\bar m)^2
+\frac{4 C_2}{(1+t)^{\lambda}}\bar m(m-\bar m)+\frac{A_mm}{(1+t)^{\lambda}}\leq 0.
\end{align}
Since
\begin{equation}\label{3.50-4}
\frac{1}{(1+t)^{\lambda}}\bar {m}(m-\bar {m})=-\kappa(\bar {\rho}^{\gamma})_xy_t=-\kappa[(\bar {\rho}^{\gamma})_xy]_t+\kappa(\bar {\rho}^{\gamma})_t(\rho-\bar{\rho})+\{\cdots\}_x,\nonumber
\end{equation}
 we reduce \eqref{3.50-3} to 
\begin{align}\label{3.50}
&\eta_{\ast t}+\{\cdots\}_x+\frac{2C_2}{(1+t)^{\lambda}}(m-\bar m)^2+\frac{A_mm}{(1+t)^{\lambda}} \leq C_1(\gamma+1)(\bar \rho^{\gamma}_xy)_t+2C_1(\gamma+1)\bar \rho^{\gamma}_ty_x.
\end{align}


By \eqref{3.50} and the basic energy estimates \eqref{w-3.36} in Lemma \ref{lemma3.5}, we may improve the convergence rate of $\|\rho-\bar{\rho}\|_{L^{\gamma+1}}$  as follows.
\begin{lemma}\label{lemma3.6}
	For $\lambda\in(0,1)$ and $\gamma \in (1,3)$, it holds that
	\begin{align}\label{3.41}
		&(1+t)^{\mu_1(\varepsilon)}\int_{\mathbf{R}}|\rho-\bar \rho|^{\gamma+1}dx+	(1+t)^{\mu_1(\varepsilon)}\int_{\mathbf{R}}m^2dx+\int_0^t\int_{\mathbf{R}}(1+\tau)^{\mu_1(\varepsilon)-\lambda}y_t^2dxd\tau
		\notag\\
		&+\int_0^t\int_{\mathbf{R}}(1+\tau)^{\mu_1(\varepsilon)-\lambda}Adxd\tau\leq C,
		\end{align}
	where for any small $\varepsilon>0$,
	\begin{equation}\label{newq}
	\mu_1(\varepsilon)=\min\{1, 1+\frac{\lambda}{2}-\frac{\lambda+1}{2(\gamma+1)}\}-\varepsilon:=\tilde {\mu}_1-\varepsilon.
	\end{equation}
	\end{lemma}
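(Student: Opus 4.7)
The plan is to multiply the relative entropy inequality \eqref{3.50} by the time weight $(1+t)^{\mu_1}$, integrate over $\mathbf{R}\times[0,t]$, and perform integration by parts in time on $[(1+t)^{\mu_1}\eta_\ast]_t$. Using $m-\bar m=y_t$ and Lemma \ref{lemma3.2} (which gives $A_mm\ge 3A$), the left-hand side retains
\[(1+t)^{\mu_1}\int_{\mathbf{R}}\eta_\ast\, dx+2C_2\int_0^t(1+\tau)^{\mu_1-\lambda}\int_{\mathbf{R}}y_t^2\, dx d\tau+\int_0^t(1+\tau)^{\mu_1-\lambda}\int_{\mathbf{R}}A_m m\, dx d\tau.\]
Thanks to Lemma \ref{lemma2.3} and \eqref{w-3.35-1}, $\eta_\ast$ is bounded below by $c(|\rho-\bar\rho|^{\gamma+1}+m^2)+A$, so a bound $(1+t)^{\mu_1}\int\eta_\ast dx\leq C$, combined with $m=y_t+\bar m$ and the fast $L^2$ decay of $\bar m$ from Lemma \ref{lemma2.2}, will directly produce the target \eqref{3.41}.

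The right-hand side produces a Gronwall-type remainder $\mu_1\int_0^t(1+\tau)^{\mu_1-1}\int\eta_\ast dx d\tau$ plus two source contributions. I would handle the exact-derivative source $C_1(\gamma+1)(\bar\rho^\gamma_x y)_t$ by a further integration by parts in $t$, producing the boundary term $C_1(\gamma+1)(1+t)^{\mu_1}\int\bar\rho^\gamma_x y dx$ and the bulk remainder $-C_1(\gamma+1)\mu_1\int_0^t(1+\tau)^{\mu_1-1}\int\bar\rho^\gamma_x y dx d\tau$. Splitting the boundary term by Cauchy--Schwarz with the weight $1/(1+t)^\lambda$ yields
\[C_1(\gamma+1)(1+t)^{\mu_1}\Bigl|\int\bar\rho^\gamma_x y\, dx\Bigr|\leq \frac{1}{4(1+t)^\lambda}\int y^2\, dx+C(1+t)^{2\mu_1+\lambda}\|(\bar\rho^\gamma)_x\|^2,\]
where the first summand is absorbed via Lemma \ref{lemma3.5} and the second stays bounded precisely when $2\mu_1+\lambda\leq (2\gamma+1)(\lambda+1)/(\gamma+1)$, equivalently $\mu_1\leq 1+\lambda/2-(\lambda+1)/(2(\gamma+1))$, which is one of the two thresholds in $\tilde\mu_1$. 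The second source, rewritten via $y_x=-(\rho-\bar\rho)$ as $-2C_1(\gamma+1)\int_0^t(1+\tau)^{\mu_1}\int\bar\rho^\gamma_t(\rho-\bar\rho)dx d\tau$, is controlled by Young's inequality using the decay of $(\bar\rho^\gamma)_t$ in $L^{(\gamma+1)/\gamma}$ from Lemma \ref{lemma2.2} together with the $\int_0^t\int|\rho-\bar\rho|^{\gamma+1}dx d\tau\leq C$ bound from Lemma \ref{lemma3.5}.

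To absorb $\mu_1\int_0^t(1+\tau)^{\mu_1-1}\int\eta_\ast dx d\tau$, I decompose $\eta_\ast$ along \eqref{w-3.35-1}. The pressure piece is dominated by $(\rho^\gamma-\bar\rho^\gamma)(\rho-\bar\rho)$ through Lemma \ref{lemma2.3}, and since $\int_0^t\int(\rho^\gamma-\bar\rho^\gamma)(\rho-\bar\rho)dx d\tau\leq C$ by Lemma \ref{lemma3.5}, its contribution is bounded provided $\mu_1\leq 1-\varepsilon$, the second threshold in \eqref{newq}. The kinetic piece is split via $m^2\leq 2y_t^2+2\bar m^2$: the $y_t^2$ part absorbs into the LHS dissipation because $(1+\tau)^{\mu_1-1}=(1+\tau)^{\mu_1-\lambda}(1+\tau)^{\lambda-1}$ with the tail $(1+\tau)^{\lambda-1}$ arbitrarily small for $\tau\geq T_0$ and bounded on $[0,T_0]$, while the $\bar m^2$ part is integrable by Lemma \ref{lemma2.2}. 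The $A$-piece is handled in the same fashion, after which a small-constant rearrangement closes the inequality.

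The principal obstacle is the Cauchy--Schwarz balance on $(1+t)^{\mu_1}\int\bar\rho^\gamma_x y dx$: $\tilde\mu_1$ is precisely the exponent at which the growth $\int y^2 dx\leq C(1+t)^\lambda$ of Lemma \ref{lemma3.5} and the decay $\|(\bar\rho^\gamma)_x\|^2\leq C(1+t)^{-(2\gamma+1)(\lambda+1)/(\gamma+1)}$ of Lemma \ref{lemma2.2} coincide. Any larger $\mu_1$ leaves a genuinely growing residual that no other estimate can absorb, and the $\varepsilon$-loss in \eqref{newq} is the mechanism that turns the two critical equalities into strict inequalities so the Gronwall leakages can be closed uniformly in $t$.
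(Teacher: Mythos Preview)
Your overall architecture matches the paper's proof: multiply \eqref{3.50} by $(1+t)^{\mu_1}$, integrate, and estimate the four resulting right-hand terms $I_7$--$I_{10}$ using Lemma~\ref{lemma3.5} and the decay of $\bar\rho$. Your handling of the boundary term $I_7$, the bulk remainder $I_9$, and the Gronwall piece $I_8$ is correct and essentially identical to the paper (the paper actually treats the $m^2$ and $A$ parts of $I_8$ more simply, using $\mu_1-1\le 0$ together with $\int_0^t\!\int m^2\,dxd\tau\le C$ from Lemma~\ref{lemma3.5} and $(1+\tau)^{\mu_1-1}\le(1+\tau)^{\mu_1-\lambda}$ for the $A$ absorption, rather than your large-time splitting, but both work).

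There is, however, a gap in your treatment of the source $I_{10}=2C_1(\gamma+1)\int_0^t(1+\tau)^{\mu_1}\!\int\bar\rho^\gamma_t\,y_x\,dxd\tau$. Your Young split against $\|\rho-\bar\rho\|_{L^{\gamma+1}}^{\gamma+1}$ forces the complementary piece $\int_0^t(1+\tau)^{\mu_1(\gamma+1)/\gamma}\|\bar\rho^\gamma_t\|_{L^{(\gamma+1)/\gamma}}^{(\gamma+1)/\gamma}d\tau$ to be finite, which (using the explicit profile) requires
\[
\mu_1<1+\frac{\gamma(\lambda\gamma-1)}{(\gamma+1)^2}.
\]
For $\lambda<1/\gamma$ this bound is \emph{strictly smaller} than $\tilde\mu_1=1+\frac{\lambda\gamma-1}{2(\gamma+1)}$ (the difference is $(\lambda\gamma-1)\frac{\gamma-1}{2(\gamma+1)^2}<0$), so your estimate does not reach the exponent claimed in \eqref{newq} for small $\varepsilon$. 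The paper avoids this loss by using a \emph{weighted} Cauchy--Schwarz that exploits Lemma~\ref{lemma2.3}:
\[
(1+\tau)^{\mu_1}|\bar\rho^\gamma_t|\,|\rho-\bar\rho|\le C\,\bar\rho^{\gamma-1}(\rho-\bar\rho)^2+C(1+\tau)^{2\mu_1}(\bar\rho^\gamma_t)^2\bar\rho^{1-\gamma}.
\]
The first term integrates to $\le C$ via $\bar\rho^{\gamma-1}(\rho-\bar\rho)^2\le C(\rho^\gamma-\bar\rho^\gamma)(\rho-\bar\rho)$ and Lemma~\ref{lemma3.5}; the second term satisfies $\int(\bar\rho^\gamma_t)^2\bar\rho^{1-\gamma}dx\le C(1+\tau)^{-\lambda-3+\frac{\lambda+1}{\gamma+1}}$, whose time integral with weight $(1+\tau)^{2\mu_1}$ is finite precisely when $\mu_1<1+\frac{\lambda}{2}-\frac{\lambda+1}{2(\gamma+1)}$, recovering the sharp threshold. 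Replacing your Young step with this weighted split closes the argument.
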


\begin{proof}
	Multiplying $(1+t)^{\mu_1}$ with \eqref{3.50} and integrating over $\mathbf{R}\times[0,t]$ give
\begin{align}\label{3.54}
&(1+t)^{\mu_1}\int_{\mathbf{R}}\eta_{\ast}dx+2C_2\int_0^t\int_{\mathbf{R}}(1+\tau)^{\mu_1-\lambda}(m-\bar m)^2dxd\tau
+\int_0^t\int_{\mathbf{R}}(1+\tau)^{\mu_1-\lambda}A_mmdxd\tau 
\notag\\
\leq&C+C(1+t)^{\mu_1}\int_{\mathbf{R}}\bar \rho^{\gamma}_xydx+\mu_1\int_0^t\int_{\mathbf{R}}(1+\tau)^{\mu_1-1}\eta_{\ast}dxd\tau
\notag\\
&+C_1(\gamma+1) \mu_1\int_0^t\int_{\mathbf{R}}(1+\tau)^{\mu_1-1}\bar \rho^{\gamma}_xydxd\tau+2C_1(\gamma+1) \int_0^t\int_{\mathbf{R}}(1+\tau)^{\mu_1}\bar \rho^{\gamma}_ty_xdxd\tau
\notag\\
=&:C+I_7+I_8+I_9+I_{10}.
\end{align}	
Then it follows from Lemma \ref{lemma3.5} and \eqref{newq} that 
\begin{align}\label{3.55}
I_7&=(1+t)^{\mu_{1}}\int_{\mathbf{R}}\bar \rho^{\gamma}_xydx
\leq \frac{1}{(1+t)^{\lambda}}\int_{\mathbf{R}}y^2dx+(1+t)^{\lambda+2\mu_1}\int_{\mathbf{R}}(\bar \rho^{\gamma}_x)^2dx
\notag\\
&\leq C+(1+t)^{2\mu_1-\lambda-2+\frac{\lambda+1}{\gamma+1}}
\leq C,
\end{align}
\begin{align}
	I_9&=C_1(\gamma+1) \mu_1\int_0^t\int_{\mathbf{R}}(1+\tau)^{\mu_1-1}\bar \rho^{\gamma}_xydxd\tau
	\notag\\
	&\leq C\mu_1\int_0^t\Big(\int_{\mathbf{R}}\frac{y^2}{(1+\tau)^{\lambda}}dx\Big)^{\frac{1}{2}}\Big(\int_{\mathbf{R}}(1+\tau)^{2\mu_1-2+\lambda}(\bar \rho^{\gamma}_x)^2dx\Big)^{\frac{1}{2}}d\tau
	\notag\\
	&\leq C\int_0^t(1+\tau)^{\mu_1-2-\frac{\lambda}{2}+\frac{(\lambda+1)\gamma}{2(\gamma+1)}}d\tau
\leq C
\end{align}
and 
\begin{align}\label{3.57}
	I_{10}&=2C_1(\gamma+1) \int_0^t\int_{\mathbf{R}}(1+\tau)^{\mu_1}\bar \rho^{\gamma}_ty_xdxd\tau
	\notag\\
	&\leq C\int_0^t\int_{\mathbf{R}}\bar \rho^{\gamma-1}(\rho-\bar \rho)^2dxd\tau+C\int_0^t\int_{\mathbf{R}}(1+\tau)^{2\mu_1}(\bar \rho^{\gamma}_t)^2\bar \rho^{1-\gamma}dxd\tau
	\notag\\
	&\leq C+C\int_0^t(1+\tau)^{2\mu_1-\lambda-3+\frac{\lambda+1}{\gamma+1}}d\tau
\leq C.
\end{align}
Note $\mu_1\leq 1$, we conclude from Lemma \ref{lemma2.3} and Lemma \ref{lemma3.5} that 
\begin{align}\label{3.591}
	I_{8}&=\mu_1\int_0^t\int_{\mathbf{R}}(1+\tau)^{\mu_1-1}\eta_{\ast}dxd\tau
	\notag\\
	&= C_1\mu_1\int_0^t\int_{\mathbf{R}}(1+\tau)^{\mu_1-1}(\rho^{\gamma+1}-\bar \rho^{\gamma+1}-(\gamma+1)\bar \rho^{\gamma}(\rho-\bar \rho))dxd\tau
	\notag\\
	&\quad +C_2\mu_1\int_0^t\int_{\mathbf{R}}(1+\tau)^{\mu_1-1}m^2dxd\tau+\mu_1\int_0^t\int_{\mathbf{R}}(1+\tau)^{\mu_1-1}Adxd\tau
	\notag\\
	&\leq C+\mu_1\int_0^t\int_{\mathbf{R}}(1+\tau)^{\mu_1-1}Adxd\tau.
\end{align}
Substituting \eqref{3.55}-\eqref{3.57} into \eqref{3.54} leads to
\begin{align*}
	&(1+t)^{\mu_1(\varepsilon)}\int_{\mathbf{R}}\eta_{\ast}dx+2C_2\int_0^t\int_{\mathbf{R}}(1+\tau)^{\mu_1(\varepsilon)-\lambda}y_t^2dxd\tau
+(3-\mu_1(\varepsilon))\int_0^t\int_{\mathbf{R}}(1+\tau)^{\mu_1(\varepsilon)-\lambda}Adxd\tau \leq C,
\end{align*}	
then \eqref{3.41} follows from the definition of $\eta_{\ast}$ and Lemma \ref{lemma2.3}, and the proof of Lemma \ref{lemma3.6} is completed.
	\end{proof}
\begin{remark}
	Since $\tilde {\mu}_1-\lambda>0$, the rate of $\|\rho-\bar \rho\|_{L^{\gamma+1}}$ is improved and  the better estimates for $(1+t)^{\mu_1}\int_{\mathbf{R}}m^2dx$ and $\int_0^t\int_{\mathbf{R}}(1+\tau)^{\mu_1-\lambda}y_t^2dxd\tau$ are obtained.	
	\end{remark}
 
Based on the new estimates for $(1+t)^{\mu_1}\int_{\mathbf{R}}m^2dx$ and $\int_0^t\int_{\mathbf{R}}(1+\tau)^{\mu_1-\lambda}y_t^2dxd\tau$, we can   study $\int_0^t\int_{\mathbf{R}}(1+\tau)^{\theta_1}|\rho-\bar \rho|^{\gamma+1}dxd\tau $ as in Lemma \ref{w-lemma3.7} below.

\begin{lemma}\label{bulemma3.7}
	For $\lambda \in (0,1)$ and any given $\beta>0$, it holds that 
	\begin{align}\label{3.499}
		&(1+t)^{\theta}\int_{\bf{R}}\eta_edx+(1-\frac{\theta}{2})\int_{0}^t\int_{\bf{R}}(1+\tau)^{\theta-\lambda}\frac{m^2}{\rho}dxd\tau
		\leq C+\nu_3\int_{0}^t\int_{\bf{R}}(1+\tau)^{\beta}|\rho-\bar \rho|^{\gamma+1}dxd\tau,
		\end{align}
	where $\theta<\min\{\frac{(\gamma-1)(\lambda+1)}{\gamma+1},\frac{(\gamma-1)(1+\beta)}{\gamma}\}$ and $\nu_3$ is a constant to be determined.
	\end{lemma}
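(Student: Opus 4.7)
The plan is to mimic the proof of Lemma \ref{lemma-1}, but with the weight $(1+t)^{\theta}$ in place of $(1+t)^{a}$ and a sharper domain splitting designed to generate the weight $(1+t)^{\beta}$ on the right-hand side. First, I would multiply the mechanical entropy inequality \eqref{en.2} by $(1+t)^{\theta}$ and integrate over $\mathbf{R}\times[0,t]$, which via the divergence-measure field framework that produced \eqref{3.15} yields
\begin{equation*}
(1+t)^{\theta}\int_{\mathbf{R}}\eta_e\,dx+\int_0^t\int_{\mathbf{R}}(1+\tau)^{\theta-\lambda}\frac{m^2}{\rho}\,dxd\tau\leq C+\theta\int_0^t\int_{\mathbf{R}}(1+\tau)^{\theta-1}\eta_e\,dxd\tau.
\end{equation*}
Decomposing $\eta_e=\frac{m^2}{2\rho}+\frac{\kappa}{\gamma-1}\rho^{\gamma}$ and invoking $\lambda<1$ so that $(1+\tau)^{\theta-1}\leq(1+\tau)^{\theta-\lambda}$, the kinetic residual is absorbed into the left-hand side, producing the coefficient $1-\theta/2$ and leaving the pressure residual $J:=\frac{\theta\kappa}{\gamma-1}\int_0^t\int_{\mathbf{R}}(1+\tau)^{\theta-1}\rho^{\gamma}\,dxd\tau$ to estimate.

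Next, I would split $J$ at the threshold $\rho=\nu_0(1+\tau)^{-\alpha}$, where $\nu_0>0$ and $\alpha>0$ are to be specified. On the sparse region $\{\rho\leq\nu_0(1+\tau)^{-\alpha}\}$, the pointwise bound $\rho^{\gamma}\leq\nu_0^{\gamma-1}(1+\tau)^{-\alpha(\gamma-1)}\rho$ combined with the mass conservation $\int\rho\,dx=M$ gives a contribution bounded by $C\nu_0^{\gamma-1}\int_0^t(1+\tau)^{\theta-1-\alpha(\gamma-1)}\,d\tau$, which is finite precisely when $\alpha>\theta/(\gamma-1)$. On the dense region $\{\rho>\nu_0(1+\tau)^{-\alpha}\}$, the bound $\rho^{\gamma}\leq\nu_0^{-1}(1+\tau)^{\alpha}\rho^{\gamma+1}$ together with the convexity estimate $\rho^{\gamma+1}\leq C(|\rho-\bar\rho|^{\gamma+1}+\bar\rho^{\gamma+1})$ produces two pieces. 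The first, $\frac{C}{\nu_0}\int_0^t(1+\tau)^{\theta-1+\alpha}\int_{\mathbf{R}}|\rho-\bar\rho|^{\gamma+1}\,dxd\tau$, is absorbed into $\nu_3\int_0^t\int_{\mathbf{R}}(1+\tau)^{\beta}|\rho-\bar\rho|^{\gamma+1}\,dxd\tau$ provided $\theta-1+\alpha\leq\beta$ and $\nu_0$ is chosen large enough that $C/\nu_0\leq\nu_3$. The second piece, controlled via Lemma \ref{lemma2.2} by $C\int_0^t(1+\tau)^{\theta-1+\alpha-\gamma(\lambda+1)/(\gamma+1)}\,d\tau$, is bounded precisely when $\alpha+\theta<\gamma(\lambda+1)/(\gamma+1)$.

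The main obstacle is verifying that the three constraints
\begin{equation*}
\frac{\theta}{\gamma-1}<\alpha,\qquad \alpha+\theta<\frac{\gamma(\lambda+1)}{\gamma+1},\qquad \alpha\leq\beta+1-\theta
\end{equation*}
can be satisfied simultaneously. A direct algebraic check shows that the admissible interval for $\alpha$ is non-empty if and only if $\theta<(\gamma-1)(\lambda+1)/(\gamma+1)$ and $\theta<(\gamma-1)(1+\beta)/\gamma$, which is exactly the hypothesis of the lemma. Under this hypothesis I would fix $\alpha$ slightly above $\theta/(\gamma-1)$ and $\nu_0$ sufficiently large, at which point the three pieces combine to give \eqref{3.499}.
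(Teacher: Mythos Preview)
Your argument is correct. The domain-splitting at $\rho=\nu_0(1+\tau)^{-\alpha}$, followed by the compatibility check on $\alpha$, recovers exactly the two constraints in the hypothesis, and the choice of $\nu_0$ large (depending on $\nu_3$) is harmless since it only enlarges the constant $C$.

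The paper, however, takes a more direct route and avoids the auxiliary parameter $\alpha$ altogether. After arriving at the residual $\int_0^t\!\int_{\mathbf{R}}(1+\tau)^{\theta-1}\rho^{\gamma}\,dx\,d\tau$, it simply writes $\rho^{\gamma}\leq C(|\rho-\bar\rho|^{\gamma}+\bar\rho^{\gamma})$. The $\bar\rho^{\gamma}$ contribution is handled by $\int\bar\rho^{\gamma}\,dx\leq M\|\bar\rho\|_{L^{\infty}}^{\gamma-1}\leq C(1+\tau)^{-(\gamma-1)(\lambda+1)/(\gamma+1)}$, which is time-integrable precisely when $\theta<\frac{(\gamma-1)(\lambda+1)}{\gamma+1}$. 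The $|\rho-\bar\rho|^{\gamma}$ contribution is treated by Young's inequality with conjugate exponents $\bigl(\tfrac{\gamma}{\gamma-1},\gamma\bigr)$:
\[
(1+\tau)^{\theta-1}|\rho-\bar\rho|^{\gamma}\leq \nu_3(1+\tau)^{\beta}|\rho-\bar\rho|^{\gamma+1}+C(1+\tau)^{\beta+(\theta-1-\beta)\gamma}|\rho-\bar\rho|,
\]
and since $\int|\rho-\bar\rho|\,dx\leq 2M$, the remainder is time-integrable precisely when $\beta+(\theta-1-\beta)\gamma<-1$, i.e.\ $\theta<\frac{(\gamma-1)(1+\beta)}{\gamma}$. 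Thus in the paper the two constraints on $\theta$ arise immediately from the two pieces, with no optimization step. Your threshold-splitting approach, patterned on the proof of Lemma~\ref{lemma-1}, is an equally valid alternative; its advantage is methodological uniformity with the earlier lemmas, while the paper's Young-inequality argument is shorter and makes the origin of each constraint more transparent.
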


\begin{proof}
		Multiplying \eqref{en.2} with $(1+t)^{\theta}$ and integrating over $\mathbf{R}\times [0,t]$ give
			\begin{align}\label{3.500}
			&(1+t)^{\theta}\int_{\bf{R}}\eta_edx+(1-\frac{\theta}{2})\int_{0}^t\int_{\bf{R}}(1+\tau)^{\theta-\lambda}\frac{m^2}{\rho}dxd\tau
			\notag\\
			\leq & C+C\int_{0}^t\int_{\bf{R}}(1+\tau)^{\theta-1}|\rho-\bar \rho|^{\gamma}dxd\tau+C\int_{0}^t\int_{\bf{R}}(1+\tau)^{\theta-1}\bar \rho^{\gamma}dxd\tau.
		\end{align}
	Noting that $\theta<\min\{\frac{(\gamma-1)(\lambda+1)}{\gamma+1},\frac{(\gamma-1)(1+\beta)}{\gamma}\}$, we get 
	\[\beta+(\theta-1-\beta)\gamma<-1\quad \mbox{and} \quad \theta-1-\frac{(\lambda+1)(\gamma-1)}{\gamma+1}<-1,\]
	which yields that 
	\begin{align}\label{3.511}
		&\int_{0}^t\int_{\bf{R}}(1+\tau)^{\theta-1}|\rho-\bar \rho|^{\gamma}dxd\tau
		\notag\\
		\leq& \nu_3\int_{0}^t\int_{\bf{R}}(1+\tau)^{\beta}|\rho-\bar \rho|^{\gamma+1}dxd\tau+C\int_{0}^t\int_{\bf{R}}(1+\tau)^{\beta+(\theta-1-\beta)\gamma}|\rho-\bar \rho|dxd\tau
			\notag\\
		\leq& C+\nu_3\int_{0}^t\int_{\bf{R}}(1+\tau)^{\beta}|\rho-\bar \rho|^{\gamma+1}dxd\tau
		\end{align}
	and
	\begin{align}\label{3.522}
		\int_{0}^t\int_{\bf{R}}(1+\tau)^{\theta-1}\bar \rho^{\gamma}dxd\tau\leq M\int_{0}^t(1+\tau)^{\theta-1-\frac{(\lambda+1)(\gamma-1)}{\gamma+1}}d\tau\leq C,
		\end{align} 
	where $\nu_3$ is a constant to be determined.
	Substituting \eqref{3.511}-\eqref{3.522} into \eqref{3.500} gives \eqref{3.499}, and the proof of Lemma \ref{bulemma3.7} is completed.
	\end{proof}

\begin{lemma}\label{w-lemma3.7}
	For $\lambda\in(0,1)$ and $\gamma \in (1,3)$, it holds that
	\begin{align}\label{3.49}
		&(1+t)^{\theta_1(\varepsilon)-\lambda}\int_{\mathbf{R}}y^2dx+\int_0^t\int_{\mathbf{R}}(1+\tau)^{\theta_1(\varepsilon)}(\rho^{\gamma}-\bar \rho^{\gamma})(\rho-\bar \rho)dxd\tau+\int_0^t\int_{\mathbf{R}}(1+\tau)^{\theta_1(\varepsilon)}m^2dxd\tau
		\notag\\
		& +(\lambda-\theta_1(\varepsilon))\int_0^t\int_{\mathbf{R}}(1+\tau)^{\theta_1(\varepsilon)-\lambda-1}y^2dxd\tau
		\leq C,
	\end{align}
	where $\theta_1(\varepsilon)=\min\{\tilde {\mu}_1-\lambda, \lambda, \frac{\gamma-\lambda}{\gamma+1}\}-\varepsilon=:\tilde \theta_1-\varepsilon$ for small $\varepsilon>0$. 
\end{lemma}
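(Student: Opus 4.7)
The plan is to run a weighted version of the basic energy argument from Lemma \ref{lemma3.5}, now applying the multiplier $(1+t)^{\theta_1(\varepsilon)}y$ rather than just $y$ to the nonlinear wave equation \eqref{E:2.12}, and integrating over $\mathbf{R}\times[0,t]$. Integrating by parts in $x$ (using $y_x=-(\rho-\bar\rho)$) and in $t$, the four terms of \eqref{E:2.12} will produce exactly the four coercive quantities appearing on the left of \eqref{3.49}: the second-order piece $yy_{tt}$ yields a top-level $(1+t)^{\theta_1}\int yy_t\,dx$ plus $-\int_0^t\int_{\mathbf{R}}(1+\tau)^{\theta_1}y_t^2\,dxd\tau-\theta_1\int_0^t\int_{\mathbf{R}}(1+\tau)^{\theta_1-1}yy_t\,dxd\tau$; the damping $(1+\tau)^{-\lambda}yy_t$ integrates to $\frac{1}{2}(1+t)^{\theta_1-\lambda}\|y\|^2+\frac{\lambda-\theta_1}{2}\int_0^t\int_{\mathbf{R}}(1+\tau)^{\theta_1-\lambda-1}y^2\,dxd\tau$, giving the first and fourth coercive terms; the pressure $\kappa y(\rho^\gamma-\bar\rho^\gamma)_x$ produces $\kappa\int_0^t\int_{\mathbf{R}}(1+\tau)^{\theta_1}(\rho^\gamma-\bar\rho^\gamma)(\rho-\bar\rho)\,dxd\tau$; and the flux $y(m^2/\rho)_x$ splits as $\int_0^t\int_{\mathbf{R}}(1+\tau)^{\theta_1}m^2\,dxd\tau-\int_0^t\int_{\mathbf{R}}(1+\tau)^{\theta_1}(\bar\rho/\rho)m^2\,dxd\tau$, whose first piece is the third coercive term.

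Each of the three items in the definition of $\tilde\theta_1$ then enters for a specific reason. First, $\theta_1\leq\tilde\mu_1-\lambda$ is exactly what is needed so that the negative $\int_0^t\int_{\mathbf{R}}(1+\tau)^{\theta_1}y_t^2\,dxd\tau$ produced on the right-hand side is absorbed by Lemma \ref{lemma3.6}. Second, $\theta_1\leq\lambda$ keeps the coefficient $(\lambda-\theta_1)$ of the damping time-integral nonnegative, so it stays on the left. Third, $\theta_1\leq(\gamma-\lambda)/(\gamma+1)$ controls the forcing term $-\int(1+\tau)^{\theta_1}y\bar m_t$: integrating it by parts in $t$ as $-(y\bar m)_t+y_t\bar m$ and using Young's inequality leaves $\int_0^t\int_{\mathbf{R}}(1+\tau)^{\theta_1}\bar m^2\,dxd\tau$, which by Lemma \ref{lemma2.2} behaves like $\int_0^t(1+\tau)^{\theta_1+2\lambda-(2\gamma+1)(\lambda+1)/(\gamma+1)}d\tau$ and is summable precisely when $\theta_1<(\gamma-\lambda)/(\gamma+1)$. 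The same exponent also governs the $\bar\rho^\gamma\rho$-type boundary pieces produced by the damping and by the $\bar m_t$-splitting, while the remaining boundary and cross terms $(1+t)^{\theta_1}|\int yy_t\,dx|$ and $\theta_1\int_0^t\int_{\mathbf{R}}(1+\tau)^{\theta_1-1}yy_t\,dxd\tau$ are handled by Young's inequality together with the a priori bound $(1+t)^{-\lambda}\|y\|^2\leq C$ from Lemma \ref{lemma3.5}.

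The hardest step will be the non-coercive flux remainder $\int_0^t\int_{\mathbf{R}}(1+\tau)^{\theta_1}(\bar\rho/\rho)m^2\,dxd\tau$. Using $\|\bar\rho\|_{L^\infty}\leq C(1+t)^{-(\lambda+1)/(\gamma+1)}$ from Lemma \ref{lemma2.2} reduces it to bounding $\int_0^t\int_{\mathbf{R}}(1+\tau)^{\theta_1-(\lambda+1)/(\gamma+1)}m^2/\rho\,dxd\tau$, which will require the weighted energy inequality of Lemma \ref{lemma-1} (or, in the proper range, its finer variant Lemma \ref{bulemma3.7}). That step produces a small multiple of $\int_0^t\int_{\mathbf{R}}(\rho^\gamma-\bar\rho^\gamma)(\rho-\bar\rho)\,dxd\tau$ which must be absorbed back into the pressure term on the left, forcing a careful selection of the small parameters $\nu_1,\nu_3$ analogous to the one at the end of the proof of Lemma \ref{lemma3.5}. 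Once this absorption has been performed and the top-level boundary piece $(1+t)^{\theta_1}|\int yy_t\,dx|$ is split by Young's inequality into $\frac{1}{4}(1+t)^{\theta_1-\lambda}\|y\|^2+C(1+t)^{\theta_1+\lambda}\|y_t\|^2$ (the latter already $\leq C$ by Lemma \ref{lemma3.6}), the estimate \eqref{3.49} follows.
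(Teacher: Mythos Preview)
Your proposal is correct and follows essentially the same route as the paper: multiply \eqref{E:2.12} by $(1+t)^{\theta_1}y$, integrate, and control the resulting right-hand side terms via Lemmas~\ref{lemma3.5}, \ref{lemma3.6}, and \ref{bulemma3.7}, with the three constraints in $\tilde\theta_1$ arising exactly where you indicate. Two small remarks: first, the constraint $\theta_1<(\gamma-\lambda)/(\gamma+1)$ is needed \emph{twice}---once for the $\int(1+\tau)^{\theta_1}\bar m^2$ integrability you mention, and once (equivalently) so that Lemma~\ref{bulemma3.7} applies with $\beta=\theta_1$ to the flux remainder (this is the paper's check \eqref{3.64}); second, the small multiple produced by Lemma~\ref{bulemma3.7} carries the weight $(1+\tau)^{\theta_1}$, not weight $1$, so it is absorbed into the \emph{weighted} pressure term on the left, and Lemma~\ref{lemma-1} alone would not suffice in general---you should commit to Lemma~\ref{bulemma3.7} here.
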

\begin{proof}
		Multiplying \eqref{E:2.12} with $(1+t)^{\theta_1}y$ and integrating over $\mathbf{R}\times [0,t]$ give
	\begin{align}\label{w-3.50}
		&\frac{1 }{2}(1+t)^{\theta_1-\lambda}\int_{\mathbf{R}}y^2dx+\int_0^t\int_{\mathbf{R}}(1+\tau)^{\theta_1}m^2 dxd\tau 
		\notag\\
		&+\frac{1}{2}(\lambda-\theta_1)\int_0^t\int_{\mathbf{R}}(1+\tau)^{\theta_1-\lambda-1} y^2dxd\tau+\kappa \int_0^t\int_{\mathbf{R}}(1+\tau)^{\theta_1}(\rho^{\gamma}-\bar \rho^{\gamma})(\rho-\bar \rho)dxd\tau
		\notag\\
		=& C_0+\Big[(1+t)^{\theta_1}\int_{\mathbf{R}}ymdx+\int_0^t\int_{\mathbf{R}}(1+\tau)^{\theta_1}y_t^2 dxd\tau\Big]+\theta_1\int_0^t\int_{\mathbf{R}}(1+\tau)^{\theta_1-1}yy_t dxd\tau 
		\notag\\
		&+\int_0^t\int_{\mathbf{R}}(1+\tau)^{\theta_1}\bar my_t dxd\tau
		+\theta_1\int_0^t\int_{\mathbf{R}}(1+\tau)^{\theta_1-1}\bar my dxd\tau  
		\notag\\
		&+\int_0^t\int_{\mathbf{R}}(1+\tau)^{\theta_1}\frac{m^2}{\rho}\bar \rho dxd\tau
	:=C_0+I_{11}+I_{12}+I_{13}+I_{14}+I_{15}.
	\end{align}
Then, we may deduce from 
 Lemma \ref{lemma3.6} and $\lambda\in(0,1)$ that
\begin{align}\label{3.51}
	I_{11}&=(1+t)^{\theta_1}\int_{\mathbf{R}}ymdx+\int_0^t\int_{\mathbf{R}}(1+\tau)^{\theta_1}y_t^2 dxd\tau
	\notag\\
&\leq \frac{1 }{4}(1+t)^{\theta_1-\lambda}\int_{\mathbf{R}}y^2dx+C\int_{\mathbf{R}}(1+t)^{\theta_1+\lambda}m^2dx+C
\notag\\
&\leq  \frac{1 }{4}(1+t)^{\theta_1-\lambda}\int_{\mathbf{R}}y^2dx+C,\\
	I_{12}&=\theta_1\int_0^t\int_{\mathbf{R}}(1+\tau)^{\theta_1-1}yy_t dxd\tau 
	\notag\\
	&\leq \nu_4\int_0^t\int_{\mathbf{R}}(1+\tau)^{\theta_1-\lambda-1} y^2dxd\tau+C\int_0^t\int_{\mathbf{R}}(1+\tau)^{\theta_1+\lambda-1}y_t^2dxd\tau
	\notag\\
	&\leq \nu_4\int_0^t\int_{\mathbf{R}}(1+\tau)^{\theta_1-\lambda-1} y^2dxd\tau+C,
	\end{align}
and 
\begin{align}\label{3.611}
	I_{13}&=\int_0^t\int_{\mathbf{R}}(1+\tau)^{\theta_1}\bar my_t dxd\tau 
	\notag\\
	&\leq \nu_4\int_0^t\int_{\mathbf{R}}(1+\tau)^{\theta_1}m^2 dxd\tau+C(\nu_4)\int_0^t\int_{\mathbf{R}}(1+\tau)^{\theta_1}y_t^2dxd\tau
	\notag\\
	&\leq \nu_4\int_0^t\int_{\mathbf{R}}(1+\tau)^{\theta_1}m^2 dxd\tau+C,
\end{align}
where $\nu_4>0$ is a constant to be determined.

Next, we will handle the remained terms $I_{14}$ and $I_{15}$. Note that 
\[\theta_1+(\lambda-1)(1+\frac{1}{\gamma})-\frac{(\lambda+1)\gamma}{\gamma+1}<-1,\]
we have
\begin{align}\label{3.63}
	I_{14}&=\theta_1\int_0^t\int_{\mathbf{R}}(1+\tau)^{\theta_1-1}\bar my dxd\tau
	\notag\\
	&\leq \nu_4 \int_0^t\int_{\mathbf{R}}(1+\tau)^{\theta_1}|\rho-\bar \rho|^{\gamma+1}dxd\tau+C\int_0^t(1+\tau)^{\theta_1+(\lambda-1)(1+\frac{1}{\gamma})-\frac{(\lambda+1)\gamma}{\gamma+1}}d\tau
	\notag\\
	&\leq C+\nu_4 \int_0^t\int_{\mathbf{R}}(1+\tau)^{\theta_1}|\rho-\bar \rho|^{\gamma+1}dxd\tau.
\end{align}
On the other hand, note that 
\begin{equation}\label{3.64}
\theta_1-\frac{\lambda+1}{\gamma+1}<\min\{\frac{(\gamma-1)(\lambda+1)}{\gamma+1},\frac{(\gamma-1)(1+\theta_1)}{\gamma}\}-\lambda.
\end{equation}
Set $\beta=\theta_1$ in Lemma 
\ref{bulemma3.7}, it follows from \eqref{3.499} and \eqref{3.64} that
\begin{align} \label{3.65}
	I_{15}&=\int_0^t\int_{\mathbf{R}}(1+\tau)^{\theta_1}\frac{m^2}{\rho}\bar \rho dxd\tau
\leq C\int_0^t\int_{\mathbf{R}}(1+\tau)^{\theta_1-\frac{\lambda+1}{\gamma+1}}\frac{m^2}{\rho} dxd\tau
	\notag\\
	&\leq C+\nu_3\int_{0}^t\int_{\bf{R}}(1+\tau)^{\theta_1}|\rho-\bar \rho|^{\gamma+1}dxd\tau.
	\end{align}
Substituting \eqref{3.51}-\eqref{3.63} and \eqref{3.65} into \eqref{w-3.50}, together with Lemma \ref{lemma2.3}, yields that
\begin{align*}
		&\frac{1 }{4}(1+t)^{\theta_1-\lambda}\int_{\mathbf{R}}y^2dx+(1-\nu_4)\int_0^t\int_{\mathbf{R}}(1+\tau)^{\theta_1}m^2 dxd\tau 
	\notag\\
	&+\frac{1}{2}(1-\nu_4)(\lambda-\theta_1)\int_0^t\int_{\mathbf{R}}(1+\tau)^{\theta_1-\lambda-1} y^2dxd\tau
	\notag\\
	&+[\kappa-C(\nu_3+\nu_4)]\int_0^t\int_{\mathbf{R}}(1+\tau)^{\theta_1}(\rho^{\gamma}-\bar \rho^{\gamma})(\rho-\bar \rho)dxd\tau
\leq C.
	\end{align*}
Choosing 
\begin{align*}
	\begin{cases}
		1-\nu_4>\frac{1}{2}, \\
		\kappa-C(\nu_3+\nu_4)>\frac{\kappa}{2},
	\end{cases}
\end{align*}
 leads to \eqref{3.49}. Thus the proof of Lemma \ref{w-lemma3.7} is completed.
	\end{proof}
	
 With the aid of lemma \ref{lemma3.6} and lemma \ref{w-lemma3.7}, we can start the iteration procedure.
 

\begin{lemma}\label{lemma3.8-1}
	Under the conditions of Theorem \ref{theorem 1.1}, it holds that
	\begin{align}\label{3.58-1}
		&(1+t)^{\mu_{k+1}(\varepsilon)}\int_{\mathbf{R}}|\rho-\bar \rho|^{\gamma+1}dx+	(1+t)^{\mu_{k+1}(\varepsilon)}\int_{\mathbf{R}}m^2dx+\int_0^t\int_{\mathbf{R}}(1+\tau)^{\mu_{k+1}(\varepsilon)-\lambda}y_t^2dxd\tau
		\notag\\
		&+\int_0^t\int_{\mathbf{R}}(1+\tau)^{\mu_{k+1}(\varepsilon)-\lambda}Adxd\tau\leq C,
	\end{align}
	where 
\begin{equation}\label{3.691-1}
\mu_{k+1}(\varepsilon)=\tilde {\mu}_{k+1}-\varepsilon=\min\{1+\tilde \theta_k,\, 1+\frac{\lambda}{2}-\frac{\lambda+1}{2(\gamma+1)}+\frac{\tilde \theta_k}{2} \}-\varepsilon, \quad \forall\, k\in \mathbf{N}
\end{equation}
and 
\begin{equation}\label{3.691-2}
\tilde \theta_k=\min\{\tilde {\mu}_k-\lambda, \lambda,\frac{\gamma-\lambda}{\gamma+1}\}
\end{equation}
are increasing sequences.
\end{lemma}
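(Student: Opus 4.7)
My plan is to prove \eqref{3.58-1} by induction on $k$, carried out simultaneously with an iterated version of Lemma \ref{w-lemma3.7} that promotes the exponent $\theta_1$ to $\theta_k$. The base case $k=1$ is exactly Lemma \ref{lemma3.6}, while the $\theta_1$-input needed on the right-hand side is Lemma \ref{w-lemma3.7}. For the inductive step I would assume both \eqref{3.58-1} at level $k$ and the upgraded version of Lemma \ref{w-lemma3.7} with $\mu_k, \theta_k$ in place of $\mu_1, \theta_1$. This companion upgrade is obtained by rerunning the proof of Lemma \ref{w-lemma3.7} verbatim after replacing every invocation of Lemma \ref{lemma3.6} by its step-$k$ version \eqref{3.58-1}, and applying Lemma \ref{bulemma3.7} with the $\theta_k$-compatible value $\beta=\theta_k$.

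\medskip

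The core computation repeats the proof of Lemma \ref{lemma3.6} after multiplying \eqref{3.50} by $(1+t)^{\mu_{k+1}}$ instead of $(1+t)^{\mu_1}$. Integration over $\mathbf{R}\times[0,t]$ generates source terms $I_7$--$I_{10}$ of the same structure as in \eqref{3.54}, now carrying the heavier weight $\mu_{k+1}$, and two of these source terms are precisely what give rise to the two candidates in the minimum \eqref{3.691-1}. First, $I_7$ and $I_9$, which involve $\bar\rho^\gamma_x\, y$, are bounded via Cauchy--Schwarz using $\|(\bar\rho^\gamma)_x\| \leq C(1+\tau)^{-(2\gamma+1)(\lambda+1)/(2(\gamma+1))}$ from Lemma \ref{lemma2.2} together with the improved bound $\|y(\cdot,\tau)\|^2 \leq C(1+\tau)^{\lambda-\theta_k}$ from the upgraded \eqref{3.49}; this forces
\[
\mu_{k+1} \leq 1+\tfrac{\lambda}{2}-\tfrac{\lambda+1}{2(\gamma+1)}+\tfrac{\theta_k}{2}.
\]
Second, the $|\rho-\bar\rho|^{\gamma+1}$ and $m^2$ pieces of $I_8$ (obtained by decomposing $\eta_\ast$ via Lemma \ref{lemma2.3}) are controlled by the weighted bounds $\int_0^t\int(1+\tau)^{\theta_k}|\rho-\bar\rho|^{\gamma+1}dx d\tau \leq C$ and $\int_0^t\int(1+\tau)^{\theta_k}m^2 dx d\tau \leq C$ from the upgraded \eqref{3.49}; factoring $(1+\tau)^{\mu_{k+1}-1}=(1+\tau)^{\theta_k}\cdot(1+\tau)^{\mu_{k+1}-1-\theta_k}$ and requiring the last exponent to be nonpositive yields $\mu_{k+1}\leq 1+\theta_k$. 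The $A$-piece of $I_8$ is absorbed on the left-hand side via $A_m m \geq 3A$ (Lemma \ref{lemma3.2}), and $I_{10}$ is treated exactly as in Lemma \ref{lemma3.6}, the pressure-pair $\bar\rho^{\gamma-1}(\rho-\bar\rho)^2$ being dominated by $(\rho^\gamma-\bar\rho^\gamma)(\rho-\bar\rho)$ via Lemma \ref{lemma2.3}. Subtracting $\varepsilon>0$ from $\tilde\mu_{k+1}$ produces strict convergence of every remaining time integral and yields \eqref{3.58-1} at step $k+1$.

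\medskip

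Monotonicity of the sequences $\{\tilde\mu_k\}$ and $\{\tilde\theta_k\}$ is immediate from \eqref{3.691-1}--\eqref{3.691-2}, since $\tilde\theta_k$ is nondecreasing in $\tilde\mu_k$ and $\tilde\mu_{k+1}$ is nondecreasing in $\tilde\theta_k$; the base inequality $\tilde\mu_1\leq\tilde\mu_2$ is checked directly from Lemma \ref{lemma3.6}. I expect the main obstacle to be the joint bookkeeping of the two intertwined iterations: at each step one must verify that the parameter hypothesis of the upgraded Lemma \ref{bulemma3.7} (namely $\theta < \min\{(\gamma-1)(\lambda+1)/(\gamma+1),\,(\gamma-1)(1+\beta)/\gamma\}$ applied with $\beta=\theta_k$) remains satisfied by the growing $\theta_k$, and that the smallness constants $\nu_1,\ldots,\nu_4$ introduced in the various absorption steps can be chosen uniformly in $k$ so that no constant degrades as the iteration proceeds. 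This is precisely why the saturating values $\lambda$ and $(\gamma-\lambda)/(\gamma+1)$ are incorporated into the definition of $\tilde\theta_k$ in \eqref{3.691-2}.
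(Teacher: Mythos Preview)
Your proposal is correct and follows essentially the same route as the paper: set up the induction on $k$, coupling the $\mu_{k+1}$-weighted relative-entropy estimate (obtained by multiplying \eqref{3.50} by $(1+t)^{\mu_{k+1}}$) with the $\theta_k$-upgraded version of Lemma~\ref{w-lemma3.7}, and identify the two constraints in \eqref{3.691-1} as coming respectively from the $\bar\rho^\gamma_x y$ terms and from the $\eta_\ast$ term. The only minor slips are indexing (the paper starts the base case at $k=0$ with the convention $\tilde\theta_0=0$, whereby Lemma~\ref{lemma3.6} already is the statement for $\mu_1$) and the handling of $I_{10}$, which in the iterated version must use the $\theta_k$-weighted bound on $\int_0^t\!\int(1+\tau)^{\theta_k}\bar\rho^{\gamma-1}(\rho-\bar\rho)^2\,dx\,d\tau$ rather than the unweighted one from Lemma~\ref{lemma3.5}; neither affects the argument.
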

\begin{proof}
From Lemma \ref{lemma3.6}, \eqref{3.58-1} holds for $k=0$ with the supplementary definition of $\tilde \theta_0=0$. Next we will use Lemma \ref{w-lemma3.7} to show that  \eqref{3.58-1} is true for $k=1$.	Multiplying $(1+t)^{\mu_2}$ with \eqref{3.50} and integrating over $\mathbf{R}\times[0,t]$, we have 
	\begin{align}\label{w-3.60}
		&(1+t)^{\mu_2}\int_{\mathbf{R}}\eta_{\ast}dx+2C_2\int_0^t\int_{\mathbf{R}}(1+\tau)^{\mu_2-\lambda}y_t^2dxd\tau+3\int_0^t\int_{\mathbf{R}}(1+\tau)^{\mu_2-\lambda}Adxd\tau 
		\notag\\
		\leq&C+C(1+t)^{\mu_2}\int_{\mathbf{R}}\bar \rho^{\gamma}_xydx+\mu_2\int_0^t\int_{\mathbf{R}}(1+\tau)^{\mu_2-1}\eta_{\ast}dxd\tau
		\notag\\
		&+C_1(\gamma+1)\mu_2\int_0^t\int_{\mathbf{R}}(1+\tau)^{\mu_2-1}\bar \rho^{\gamma}_xydxd\tau+2C_1(\gamma+1)\int_0^t\int_{\mathbf{R}}(1+\tau)^{\mu_2}\bar \rho^{\gamma}_ty_xdxd\tau
		\notag\\
		=&:C+I_{16}+I_{17}+I_{18}+I_{19}.
	\end{align}
	Following the same argument  in \eqref{3.41},  with $(1+t)^{-\lambda}\int_{\mathbf{R}}y^2dx$ and $\int_0^t\int_{\mathbf{R}}\bar \rho^{\gamma-1}(\rho-\bar \rho)^2dxd\tau$ replaced by $(1+t)^{\theta_1-\lambda}\int_{\mathbf{R}}y^2dx$ and $\int_0^t\int_{\mathbf{R}}(1+\tau)^{\theta_1}\bar \rho^{\gamma-1}(\rho-\bar \rho)^2dxd\tau$ in $I_{16},\,I_{18}$ and $I_{19}$ respectively, we obtain 
	\begin{align}\label{3.61}
		I_{16}+I_{18}+I_{19}&\leq C+C(1+t)^{2\mu_2-\lambda-2+\frac{\lambda+1}{\gamma+1}-\theta_1}\leq C,
		\end{align}
	since 
	\[\mu_2<1+\frac{\lambda}{2}-\frac{\lambda+1}{2(\gamma+1)}+\frac{\theta_1}{2}=1+\frac{\lambda}{2}-\frac{\lambda+1}{2(\gamma+1)}+\frac{\tilde \theta_1}{2}-\frac{\varepsilon}{2}.\]
Moreover, with $\int_0^t\int_{\mathbf{R}}m^2dxd\tau$ replaced by
$\int_0^t\int_{\mathbf{R}}(1+\tau)^{\theta_1}m^2dxd\tau$ in $I_{17}$, we get
\begin{align}\label{3.62}
	I_{17}\leq C+\mu_2\int_0^t\int_{\mathbf{R}}(1+\tau)^{\mu_2-1}Adxd\tau.
\end{align}	
Thus, substituting \eqref{3.61}-\eqref{3.62} into \eqref{w-3.60} yields
	\begin{align}\label{3.59}
	&(1+t)^{\mu_2(\varepsilon)}\int_{\mathbf{R}}|\rho-\bar \rho|^{\gamma+1}dx+	(1+t)^{\mu_2(\varepsilon)}\int_{\mathbf{R}}m^2dx+\int_0^t\int_{\mathbf{R}}(1+\tau)^{\mu_2(\varepsilon)-\lambda}Adxd\tau
	\notag\\
	&+\int_0^t\int_{\mathbf{R}}(1+\tau)^{\mu_2(\varepsilon)-\lambda}y_t^2dxd\tau\leq C.
\end{align}

From \eqref{3.59} and the same line in the proof of Lemma \ref{w-lemma3.7} with $\theta_1$ and $\mu_1$ replaced by $\theta_2$ and $\mu_2$,  the decay rate \eqref{3.49} in Lemma \ref{w-lemma3.7} can be improved to 
\begin{align}\label{3.59-1}
		&(1+t)^{\theta_2(\varepsilon)-\lambda}\int_{\mathbf{R}}y^2dx+\int_0^t\int_{\mathbf{R}}(1+\tau)^{\theta_2(\varepsilon)}(\rho^{\gamma}-\bar \rho^{\gamma})(\rho-\bar \rho)dxd\tau
		\notag\\
		&+\int_0^t\int_{\mathbf{R}}(1+\tau)^{\theta_2(\varepsilon)}m^2dxd\tau+(\lambda-\theta_2(\varepsilon))\int_0^t\int_{\mathbf{R}}(1+\tau)^{\theta_2(\varepsilon)-\lambda-1}y^2dxd\tau
		\leq C.
	\end{align}
	Repeating the same argument for  $k=2,3,\cdots,$ we obtain  \eqref{3.58-1} for increasing sequences $\tilde \mu_{k+1}$ and $\tilde \theta_k$.
\end {proof}

\begin{lemma}\label{lemma3.8}
	Under the conditions of Theorem \ref{theorem 1.1}, it holds that
	\begin{align}\label{3.58}
		&(1+t)^{\mu(\varepsilon)}\int_{\mathbf{R}}|\rho-\bar \rho|^{\gamma+1}dx+	(1+t)^{\mu(\varepsilon)}\int_{\mathbf{R}}m^2dx+2C_2\int_0^t\int_{\mathbf{R}}(1+\tau)^{\mu(\varepsilon)-\lambda}y_t^2dxd\tau
		\notag\\
		&+\int_0^t\int_{\mathbf{R}}(1+\tau)^{\mu(\varepsilon)-\lambda}Adxd\tau\leq C,
	\end{align}
	where 
\begin{equation}\label{3.691}
\mu(\varepsilon)=
\left\{
\begin{array}{ll}
1+\lambda-\frac{\lambda+1}{2(\gamma+1)}-\varepsilon, &\quad \lambda\in(0,\frac{\gamma}{\gamma+2}],\\
\frac{3}{2}+\frac{\lambda}{2}-\frac{\lambda+1}{\gamma+1}-\varepsilon, &\quad \lambda \in [\frac{\gamma}{\gamma+2},1).
\end{array}
\right.
\end{equation}
\end{lemma}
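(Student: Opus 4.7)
The plan is to identify $\mu(\varepsilon)$ with the limit of the sequence $\{\mu_{k+1}(\varepsilon)\}$ constructed in Lemma~\ref{lemma3.8-1} and to deduce \eqref{3.58} by invoking that lemma at a sufficiently large iteration index. Set $c:=\min\{\lambda,\,(\gamma-\lambda)/(\gamma+1)\}$. The first step is to show that the sequence $\{\tilde{\mu}_k\}$ defined by \eqref{3.691-1}--\eqref{3.691-2} is monotone increasing and reaches a constant value $\tilde{\mu}_\infty$ after finitely many iterations. Indeed, as long as $\tilde{\mu}_k-\lambda<c$ the cap in $\tilde{\theta}_k$ is inactive and one has $\tilde{\theta}_k=\tilde{\mu}_k-\lambda$; the first branch of \eqref{3.691-1} then gives $\tilde{\mu}_{k+1}\ge 1+\tilde{\mu}_k-\lambda>\tilde{\mu}_k$, so $\tilde{\mu}_k$ strictly increases with step at least $1-\lambda>0$ and therefore crosses the saturation threshold $\lambda+c$ in finitely many steps. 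Once $\tilde{\mu}_k\ge\lambda+c$ the cap $\tilde{\theta}_k=c$ is saturated and \eqref{3.691-1} immediately produces the fixed value $\tilde{\mu}_\infty=\min\{1+c,\,1+\lambda/2-(\lambda+1)/(2(\gamma+1))+c/2\}$ in one further step.

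The second step is to explicitly evaluate $\tilde{\mu}_\infty$ in each $\lambda$-regime. When $\lambda\in(0,\gamma/(\gamma+2)]$, the inequality $\lambda(\gamma+2)\le\gamma$ yields $\lambda\le(\gamma-\lambda)/(\gamma+1)$, so $c=\lambda$, and the saturated expression becomes
\begin{equation*}
\tilde{\mu}_\infty=\min\Bigl\{1+\lambda,\;1+\lambda-\tfrac{\lambda+1}{2(\gamma+1)}\Bigr\}=1+\lambda-\tfrac{\lambda+1}{2(\gamma+1)},
\end{equation*}
which matches the first line of \eqref{3.691} up to the $-\varepsilon$ shift. The persistence check $\tilde{\mu}_\infty\ge\lambda+c=2\lambda$ reduces to $\lambda\le(2\gamma+1)/(2\gamma+3)$, which holds automatically since $\gamma/(\gamma+2)\le(2\gamma+1)/(2\gamma+3)$. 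When $\lambda\in[\gamma/(\gamma+2),1)$, one has instead $c=(\gamma-\lambda)/(\gamma+1)\le\lambda$; a direct subtraction shows the second entry of the saturated min equals $\tfrac{3}{2}+\tfrac{\lambda}{2}-\tfrac{\lambda+1}{\gamma+1}$, while the first entry $1+(\gamma-\lambda)/(\gamma+1)$ exceeds it by exactly $(1-\lambda)/2>0$. Hence $\tilde{\mu}_\infty=\tfrac{3}{2}+\tfrac{\lambda}{2}-\tfrac{\lambda+1}{\gamma+1}$, matching the second line of \eqref{3.691}, and persistence $\tilde{\mu}_\infty\ge\lambda+c=\gamma(\lambda+1)/(\gamma+1)$ simplifies to $\lambda\le 1$, which is given.

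The final step is the passage to the limit. Given $\varepsilon>0$, by the finite-step stabilization pick $k$ so large that $\tilde{\mu}_{k+1}=\tilde{\mu}_\infty$, and apply Lemma~\ref{lemma3.8-1} at index $k$ with small parameter $\varepsilon$. The resulting rate $\mu_{k+1}(\varepsilon)=\tilde{\mu}_\infty-\varepsilon=\mu(\varepsilon)$, so \eqref{3.58-1} reads exactly as \eqref{3.58} (the factor $2C_2$ on the $y_t^2$ integral is absorbed into the generic constant $C$). The principal obstacle is combinatorial rather than analytic: one must correctly identify, in each $\lambda$-regime, which term realizes each of the two nested minima in the fixed-point relation, and confirm that the persistence condition $\tilde{\mu}_\infty\ge\lambda+c$ holds in both cases. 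Once these discrete checks are in hand, the heavy lifting is entirely done by the iteration machinery of Lemmas~\ref{lemma3.6}--\ref{lemma3.8-1}.
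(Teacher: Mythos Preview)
Your overall strategy coincides with the paper's: show that the increasing sequence $\{\tilde\mu_k\}$ from Lemma~\ref{lemma3.8-1} saturates after finitely many steps, compute the saturated value in each $\lambda$-regime, and then invoke Lemma~\ref{lemma3.8-1} at a sufficiently large index. Your second and third steps---the explicit evaluation of $\tilde\mu_\infty$ and the persistence checks $\tilde\mu_\infty\ge\lambda+c$---are correct and in fact treated more systematically than in the paper, which only spells out the case $\lambda\le\gamma/(\gamma+2)$.

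However, your first step contains a genuine error. You write that ``the first branch of \eqref{3.691-1} then gives $\tilde\mu_{k+1}\ge 1+\tilde\mu_k-\lambda$'' and conclude that the increment is at least $1-\lambda$. But $\tilde\mu_{k+1}$ is defined as a \emph{minimum} of two terms, so one only has $\tilde\mu_{k+1}\le 1+\tilde\theta_k$; the inequality you wrote is backwards. Once $\tilde\theta_k>\tfrac{\lambda\gamma-1}{\gamma+1}$ (cf.\ the paper's \eqref{3.71-1}) it is the second entry that realizes the minimum, and then, setting $a_k=\tilde\mu_k-\lambda$, the recursion reads $a_{k+1}=A+\tfrac12 a_k$ with $A=1-\tfrac{\lambda}{2}-\tfrac{\lambda+1}{2(\gamma+1)}$, a contraction with fixed point $2A=2-\lambda-\tfrac{\lambda+1}{\gamma+1}$; the step is $\tfrac12(2A-a_k)$, not $1-\lambda$. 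The finite-time saturation is still true because $2A>c$ in both regimes (this reduces to $\lambda<1$), so $a_{k+1}-a_k\ge\tfrac12(2A-c)>0$ uniformly while $a_k<c$. The paper reaches the same conclusion via a two-stage contradiction: first showing that eventually $\tilde\theta_k>\tfrac{\lambda\gamma-1}{\gamma+1}$ (otherwise the first branch is always active and $\tilde\mu_k\to\infty$), then iterating the second-branch recursion as a geometric sum \eqref{3.771}. Either fix closes the gap, but the uniform step bound $1-\lambda$ as you stated it is false.
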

\begin{proof}
We only need to consider the case of $\lambda\in (0,\frac{\gamma}{\gamma+2}]$ since the other case $\lambda\in [\frac{\gamma}{\gamma+2},1)$ can be treated in the similar way.
Note that   from \eqref{3.691-1} and \eqref{3.691-2}, we have 
\begin{align}\label{3.71-1}
\tilde {\mu}_{k+1}
&=
\left\{
\begin{array}{ll}
1+\tilde \theta_k, &\quad \tilde \theta_k \in (-\infty, \frac{\lambda\gamma-1}{\gamma+1}],\\
1+\frac{\lambda}{2}-\frac{\lambda+1}{2(\gamma+1)}+\frac{\tilde \theta_k}{2}, &\quad \tilde \theta_k \in (\frac{\lambda\gamma-1}{\gamma+1},\infty),
\end{array}
\right.
\end{align}
and 
\begin{equation}\label{3.791}
\tilde \theta_k=\min\{\tilde {\mu}_k-\lambda, \lambda\}, \quad \lambda\in (0,\frac{\gamma}{\gamma+2}].
\end{equation}
We claim that there exists $k_1\in \mathbf{N}$ such that for any $k\geq k_1$,
\begin{align}\label{3.74}
	\tilde \theta_{k}\geq \tilde \theta_{k_1}
	>\frac{\lambda\gamma-1}{\gamma+1}.
\end{align}
If \eqref{3.74} does not hold, 
 it follows from \eqref{3.791} that
\begin{equation}\label{ne3.71}
0<\tilde \theta_k=\tilde {\mu}_k-\lambda\leq \frac{\lambda\gamma-1}{\gamma+1}<\lambda, \quad \forall \,k \in \mathbf{N},
\end{equation}
which, together with \eqref{3.71-1}, indicates that
\[\tilde {\mu}_{k+1}=1+\tilde \theta_k=1+\tilde {\mu}_k-\lambda=k(1-\lambda)+\tilde {\mu}_1\rightarrow +\infty \quad \mbox{as}\quad k \rightarrow +\infty.\]
This contradicts \eqref{ne3.71}, and then \eqref{3.74} holds. 	We also claim that there exists $k_2\geq k_1$ such that 	
\begin{equation}\label{3.70}
\tilde \mu_{k_2}-\lambda \geq\lambda.
\end{equation}
If not, then 
\begin{equation}\label{3.77}
\tilde \theta_k=\tilde {\mu}_k-\lambda<\lambda,\quad \forall \,k\geq k_1,
\end{equation}
 which, together with \eqref{3.71-1} and \eqref{3.74}, yields that
\begin{align}\label{3.771}
	\tilde {\mu}_{k+1}-\lambda&=1+\frac{\lambda}{2}-\frac{\lambda+1}{2(\gamma+1)}+\frac{1}{2}(\tilde {\mu}_k-\lambda)-\lambda
	\notag\\
	&=\Big(1+\frac{1}{2}\Big)\Big(1+\frac{\lambda}{2}-\frac{\lambda+1}{2(\gamma+1)}-\lambda\Big)+\frac{1}{2^{2}}(\tilde {\mu}_{k-1}-\lambda)
		\notag\\
	&=\Big(1+\frac{1}{2}+\cdots+\frac{1}{2^{k-k_1}}\Big)\Big(1+\frac{\lambda}{2}-\frac{\lambda+1}{2(\gamma+1)}-\lambda\Big)+\frac{1}{2^{k+1-k_1}}(\tilde {\mu}_{k_1}-\lambda)
		\notag\\
	&>\Big(1+\frac{1}{2}+\cdots+\frac{1}{2^{k-k_1}}\Big)\Big(1-\frac{\lambda}{2}-\frac{\lambda+1}{2(\gamma+1)}\Big).
	\end{align}
Note that $1-\frac{\lambda}{2}-\frac{\lambda+1}{2(\gamma+1)}
>\frac{\lambda}{2}$ since $\lambda\leq \frac{\gamma}{\gamma+2}<\frac{2\gamma+1}{2\gamma+3}$. Thus, there always exists sufficiently large $k_3$ such that $	\tilde {\mu}_{k_3+1}-\lambda> \lambda$, which contradicts \eqref{3.77}, and then \eqref{3.70} holds. 
Finally, it follows from \eqref{3.70} 
that 
\begin{equation}\label{3.71-3}
\tilde {\mu}_{k_1+1}=
1+\lambda-\frac{\lambda+1}{2(\gamma+1)}.
\end{equation}
 Thus, the proof of Lemma \ref{lemma3.8} is completed.
	\end{proof}

\subsection{Decay rate of $\|\rho-\bar{\rho}\|_{L^1}$} We are ready to derive the convergence rate of $\|\rho-\bar{\rho}\|_{L^1}$ since the rate of $\|\rho-\bar{\rho}\|_{L^{\gamma+1}}$ obtained in Lemma \ref{lemma3.8} is fast enough.
Following  \cite{Huang-Pan-Wang}, we have 
\begin{lemma} \label{lemma3.7} 	For $\lambda\in(0,1)$ and $\gamma \in (1,3)$, it holds that
	\begin{align*}
	\int_{\mathbf{R}}|\rho-\bar\rho|dx\leq C(1+t)^{-\alpha(\varepsilon)},
	\end{align*}
where $\alpha(\varepsilon)$ satisfies:\\
	\begin{equation}\label{3.89}
\alpha(\varepsilon)=
\left\{
\begin{array}{ll}
\frac{\lambda+1}{4(\gamma+1)}-\varepsilon, &\quad \lambda\in(0,\frac{\gamma}{\gamma+2}]\\
\frac{1-\lambda}{4}-\varepsilon, &\quad \lambda \in [\frac{\gamma}{\gamma+2},1).\\
\end{array}
\right.
\end{equation}
\end{lemma}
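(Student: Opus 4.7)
The plan is to exploit the compact support of $\bar\rho$ and combine the sharp $L^{\gamma+1}$ rate just obtained in Lemma \ref{lemma3.8} with the $L^2$-control of $y$ produced by (the iterated version of) Lemma \ref{w-lemma3.7}. Set $R(t):=b(1+t)^{(\lambda+1)/(\gamma+1)}$, so that $\mathrm{supp}\,\bar\rho(\cdot,t)\subset[-R(t),R(t)]$, and decompose
\[
\int_{\mathbf{R}}|\rho-\bar\rho|\,dx=\int_{|x|\leq R(t)}|\rho-\bar\rho|\,dx+\int_{|x|>R(t)}\rho\,dx,
\]
since $\bar\rho\equiv 0$ on the complement of its support. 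I would treat the interior piece by H\"older's inequality with exponents $\gamma+1$ and $(\gamma+1)/\gamma$, bounding it by $(2R(t))^{\gamma/(\gamma+1)}\|\rho-\bar\rho\|_{L^{\gamma+1}}$ and then invoking Lemma \ref{lemma3.8} for the decay of $\|\rho-\bar\rho\|_{L^{\gamma+1}}$. The tail piece I would handle by noting that $y(x,t)=\int_x^{\infty}\rho(r,t)\,dr$ for $x\geq R(t)$ (and $y(x,t)=-\int_{-\infty}^x\rho(r,t)\,dr$ for $x\leq -R(t)$), which follows from the definition of $y$ and the mass conservation $\int\rho=\int\bar\rho=M$; this yields $\int_{|x|>R(t)}\rho\,dx\leq 2\|y\|_{L^\infty(\mathbf{R})}$.

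To control $\|y\|_{L^\infty}$ I would use the one-dimensional Gagliardo--Nirenberg-type estimate $\|y\|_{L^\infty}^2\leq 2\|y\|_{L^2}\|y_x\|_{L^2}=2\|y\|_{L^2}\|\rho-\bar\rho\|_{L^2}$, valid because $y(\pm\infty)=0$. The factor $\|y\|_{L^2}$ is supplied (with at worst slow growth in $t$) by the iterated version of Lemma \ref{w-lemma3.7}, while $\|\rho-\bar\rho\|_{L^2}$ I would estimate by the same interior/tail split: the H\"older interpolation $\int_{|x|\leq R(t)}(\rho-\bar\rho)^2\,dx\leq (2R(t))^{(\gamma-1)/(\gamma+1)}\|\rho-\bar\rho\|_{L^{\gamma+1}}^2$ on the interior together with the crude bound $\int_{|x|>R(t)}\rho^2\,dx\leq\|\rho\|_{L^\infty}\cdot 2\|y\|_{L^\infty}$ on the tail, where the uniform bound $\rho\leq C$ comes from the invariant-region Theorem \ref{theorem2.1}. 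This produces a self-consistent inequality for $\|y\|_{L^\infty}$ which I would close using Young's inequality to absorb the $\|y\|_{L^\infty}$-feedback on the right-hand side.

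The threshold $\lambda=\gamma/(\gamma+2)$ appearing in \eqref{3.89} reflects a change in which mechanism dominates the resulting estimate. In the sub-threshold range $\lambda\in(0,\gamma/(\gamma+2)]$, the iteration underlying Lemma \ref{w-lemma3.7} saturates with $\tilde\theta_k\to\lambda$, so that $\|y\|_{L^2}$ stays uniformly bounded in $t$; the interior H\"older contribution then governs the rate and produces the exponent $(\lambda+1)/[4(\gamma+1)]$. In the super-threshold range $\lambda\in[\gamma/(\gamma+2),1)$, the iteration saturates instead at $\tilde\theta_k\to(\gamma-\lambda)/(\gamma+1)<\lambda$, so $\|y\|_{L^2}$ grows at a controlled positive rate and the $\|y\|_{L^\infty}$-driven tail term becomes the bottleneck, leading to the slower exponent $(1-\lambda)/4$. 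The main obstacle I expect is in closing the self-consistent loop: because $\|y_x\|_{L^2}=\|\rho-\bar\rho\|_{L^2}$ itself depends, through the tail estimate, on $\|y\|_{L^\infty}$, careful Young's-inequality book-keeping together with an optimal choice of the split radius (and absorption of the arbitrarily small $\varepsilon$-loss into that of Lemma \ref{lemma3.8}) will be needed to recover exactly the exponents in \eqref{3.89}.
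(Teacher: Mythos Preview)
Your approach has a genuine gap: the plain H\"older estimate $\int_{|x|\le R(t)}|\rho-\bar\rho|\,dx\le (2R(t))^{\gamma/(\gamma+1)}\|\rho-\bar\rho\|_{L^{\gamma+1}}$ is too crude to recover the exponents in \eqref{3.89}. A direct computation with $R(t)\sim(1+t)^{(\lambda+1)/(\gamma+1)}$ and $\|\rho-\bar\rho\|_{L^{\gamma+1}}^{\gamma+1}\le C(1+t)^{-\mu(\varepsilon)}$ from Lemma \ref{lemma3.8} yields the interior rate $\frac{\lambda+1}{2(\gamma+1)^2}$ for $\lambda\in(0,\frac{\gamma}{\gamma+2}]$ and $\frac{1-\lambda}{2(\gamma+1)}$ for $\lambda\in[\frac{\gamma}{\gamma+2},1)$, which for every $\gamma\in(1,3)$ are strictly smaller than the claimed $\frac{\lambda+1}{4(\gamma+1)}$ and $\frac{1-\lambda}{4}$. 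Since the tail term can only make the total worse, no amount of book-keeping in your $\|y\|_{L^\infty}$ self-consistent loop will close the gap.

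What the paper does differently is two-fold. First, the tail is handled for free by mass conservation: since $\int\rho=\int\bar\rho$ and $\rho\ge0$, one has $\int_{\{\bar\rho=0\}}\rho\,dx=\big|\int_{\{\bar\rho>0\}}(\rho-\bar\rho)\,dx\big|\le\int_{\{\bar\rho>0\}}|\rho-\bar\rho|\,dx$, hence $\int_{\mathbf R}|\rho-\bar\rho|\,dx\le 2\int_{\{\bar\rho>0\}}|\rho-\bar\rho|\,dx$; no control of $\|y\|_{L^\infty}$ is needed. Second, on the support of $\bar\rho$ the paper uses a \emph{weighted} Cauchy--Schwarz with weight $\bar\rho^{\gamma-1}$:
\[
\int_{\Omega_0}|\rho-\bar\rho|\,dx\le\Big(\int\bar\rho^{\gamma-1}(\rho-\bar\rho)^2\,dx\Big)^{1/2}\Big(\int_{\Omega_0}\bar\rho^{1-\gamma}\,dx\Big)^{1/2}.
\]
The first factor is controlled at the rate $(1+t)^{-\mu(\varepsilon)/2}$ by Lemma \ref{lemma3.8} via Lemma \ref{lemma2.3} (indeed $\bar\rho^{\gamma-1}(\rho-\bar\rho)^2\le C(\rho^{\gamma-1}+\bar\rho^{\gamma-1})(\rho-\bar\rho)^2$, which is equivalent to $\eta_*$). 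The key structural fact your plain H\"older throws away is that $\bar\rho^{\gamma-1}=A-B\xi^2$ vanishes only \emph{linearly} at the edge of the support, so $\bar\rho^{1-\gamma}$ is log-integrable there: after excising a thin boundary layer $\Omega_1$ of width $(1+t)^{-b}$, one gets $\int_{\Omega_0}\bar\rho^{1-\gamma}\,dx\le C(1+t)^{\gamma(\lambda+1)/(\gamma+1)}\ln(1+t)$. This produces exactly $\frac12\big(\mu(\varepsilon)-\frac{\gamma(\lambda+1)}{\gamma+1}\big)$, which is \eqref{3.89}. Your explanation of the threshold is also off: in the paper's argument the switch at $\lambda=\gamma/(\gamma+2)$ comes entirely from the two formulas for $\mu(\varepsilon)$ in Lemma \ref{lemma3.8}, not from a competition between an interior and a tail contribution.
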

\begin{proof}
Following  \cite{Huang-Pan-Wang}, we divide the support subset of $\bar \rho$ into two parts:
	\begin{align}
	\Omega_0&=\Big(-\sqrt{\frac{A}{B}}(1+t)^{\frac{\lambda+1}{\gamma+1}}+\sqrt{\frac{A}{B}}(1+t)^{-b},\sqrt{\frac{A}{B}}(1+t)^{\frac{\lambda+1}{\gamma+1}}-\sqrt{\frac{A}{B}}(1+t)^{-b}\Big),\nonumber\\
		\Omega_1&=\Big(-\sqrt{\frac{A}{B}}(1+t)^{\frac{\lambda+1}{\gamma+1}}, -\sqrt{\frac{A}{B}}(1+t)^{\frac{\lambda+1}{\gamma+1}}+\sqrt{\frac{A}{B}}(1+t)^{-b}\Big)
		\notag\\
		&\quad \,\,\cup\Big(\sqrt{\frac{A}{B}}(1+t)^{\frac{\lambda+1}{\gamma+1}}-\sqrt{\frac{A}{B}}(1+t)^{-b}, \sqrt{\frac{A}{B}}(1+t)^{\frac{\lambda+1}{\gamma+1}}\Big),\nonumber
	\end{align}
	where $b>1$ is a constant. Naturally, $\{\bar \rho>0\}=\Omega_0\cup \Omega_1$.
	Then, we deduce from Lemma \ref{lemma3.8} that
	\begin{align*}
	\int_{\mathbf{R}}|\rho-\bar \rho|dx &\leq 2\int_{\bar \rho>0}|\rho-\bar 
	\rho|dx
	\notag\\
	&=2\int_{\Omega_0}|\rho-\bar 
	\rho|dx+2\int_{\Omega_1}|\rho-\bar 
	\rho|dx
	\notag\\
	&\leq C\Big(\int_{\Omega_0}\bar \rho^{\gamma-1}|\rho-\bar 
	\rho|^{2}dx\Big)^{\frac{1}{2}}\Big(\int_{\Omega_0}\bar \rho^{1-\gamma}dx\Big)^{\frac{1}{2}}+C\int_{\Omega_1}|\rho-\bar 
	\rho|dx
	\notag\\
	&\leq C(1+t)^{-\frac{1}{2}(\mu(\varepsilon)-\frac{\gamma(\lambda+1)}{\gamma+1})}\sqrt{\ln(1+t)},
	\end{align*}
where we have used the fact that
\begin{align*}
\int_{\Omega_0}\bar \rho^{1-\gamma}dx&\leq (1+t)^{\frac{\gamma(\lambda+1)}{\gamma+1}}\int_{0}^{\sqrt{\frac{A}{B}}(1-(1+t)^{-b-\frac{\lambda+1}{\gamma+1}})}(A-B\xi^2)^{-1}d\xi
\notag\\
&\leq C(1+t)^{\frac{\gamma(\lambda+1)}{\gamma+1}}\ln(1+t).
\end{align*}
It follows from \eqref{3.691} that
\begin{equation*}
\mu(\varepsilon)-\frac{\gamma(\lambda+1)}{\gamma+1}=
\left\{
\begin{array}{ll}
\frac{\lambda+1}{2(\gamma+1)}-\varepsilon, &\quad \lambda\in(0,\frac{\gamma}{\gamma+2}],\\
\frac{1-\lambda}{2}-\varepsilon, &\quad \lambda \in [\frac{\gamma}{\gamma+2},1),
\end{array}
\right.
\end{equation*}
which leads to
\begin{equation*}
(1+t)^{-\frac{1}{2}(\mu(\varepsilon)-\frac{\gamma(\lambda+1)}{\gamma+1})}\sqrt{\ln(1+t)}\leq (1+t)^{-\alpha(\varepsilon)},
\end{equation*}
where $\alpha(\varepsilon)$ is given in \eqref{3.89}.
Thus, the proof of Lemma \ref{lemma3.7} is completed.
	\end{proof}

{\bf \hspace{-1em}Proof of Theorem \ref{theorem 1.1}}
Theorem \ref{theorem 1.1} holds directly from Lemma \ref{lemma3.8} and Lemma \ref{lemma3.7}. 
$\hspace{7em}\Box$

\section{Acknowledgments}
S. Geng's research is supported in part by the National Natural Science Foundation of China (No. 11701489), by Natural Science Foundation of Hunan Province of China (No. 2018JJ2373), and by Excellent Youth Project of Hunan Education Department (No.18B054).
 F. Huang's research is supported in part by the
National Natural Science Foundation of China (No. 11371349).

     \end{document}